\newtheorem{coro}[theorem]{\bf Corollary}
\begin{document}
%
%

\title{Scheduling step-deteriorating jobs to minimize the total weighted tardiness on a single machine}
\author{Peng Guo        \and
        Wenming Cheng   \and
        Yi Wang 
}

\institute{P. Guo \at
              School of Mechanical Engineering, Southwest Jiaotong University,
Chengdu, China \\
              \email{pengguo318@gmail.com}           
           \and
           W. Cheng \at
               School of Mechanical Engineering, Southwest Jiaotong University,
Chengdu, China \\
          \email{wmcheng@swjtu.edu.cn}
          \and
          Y. Wang \at
          Corresponding author. Department of Mathematics, Auburn University at Montgomery, AL, USA \\
          \email{ywang2@aum.edu}
}

\date{Received: date / Accepted: date}

\maketitle

\begin{abstract}
This paper addresses the scheduling problem of minimizing the total weighted tardiness on a single machine with step-deteriorating jobs. With the assumption of deterioration, the job processing times are modeled by   step functions of  job starting times and   pre-specified job deteriorating dates.
The introduction of step-deteriorating jobs makes a single machine total weighted tardiness problem more intractable. The computational complexity of this problem under consideration was not determined.
In this study, it is firstly proved to be strongly NP-hard.
Then a mixed integer programming model is derived for solving the problem instances optimally.
In order to tackle  large-sized problems, seven dispatching heuristic procedures are developed for near-optimal solutions. Meanwhile, the solutions delivered by the proposed heuristic are further improved by a pair-wise swap movement.
Computational results are presented to reveal the performance of all proposed approaches.
\end{abstract}

\maketitle\section{Introduction}
Scheduling with meeting job due dates have received increasing attention from managers and researchers since the Just-In-Time concept was introduced in manufacturing facilities. While meeting due dates is only a qualitative performance, it usually implies that time dependent penalties are assessed on late jobs but that no benefits are derived from completing jobs early \citep{Baker2009book}.
In this case, these quantitative scheduling objectives associated with tardiness are naturally highlighted in different manufacturing environments.
The total tardiness and the total weighted tardiness are two of the most common performance measures in tardiness related scheduling problems \citep{Sen2003static}. In the traditional scheduling theory, these  two kinds of single machine scheduling problems have been extensively studied in the literature under the assumption that the processing time of a job is known in advance and constant throughout the entire operation process \citep{Koulamas2010TTU,Vepsalainen1987TWTU,Crauwels1998TWTU,Cheng2005TWTU,Bilge2007TWTU,Wang2009TWTU}.

However,  there are many practical situations that any delay or waiting in starting to process a job may cause to increase   its processing time.
Examples can be found in financial management, steel production, equipment maintenance, medicine treatment and so on.
Such problems are generally known as {\em time dependent scheduling problems} \citep{Gawiejnowicz2008TDS_Book}. Among various time dependent scheduling models, there is one case in which job processing times are formulated by   piecewise defined functions.
In the literature, these jobs with   piecewise defined processing times  are mainly presented by {\em piecewise linear deterioration} and/or {\em step-deterioration}.
In this paper, the single machine total weighted tardiness scheduling problem with step-deteriorating jobs (SMTWTSD) are addressed. For a step-deteriorating job, if it fails to be processed prior to a pre-specified threshold, its processing time will be increased by an extra time; otherwise, it only needs the basic processing time. The corresponding single machine scheduling problem was considered firstly by \citet{Sundararaghavan1994_SPSD} for minimizing the sum of the weighted completion times.

The single machine scheduling problem of makespan minimization under step-deterioration effect was investigated in \citet{Mosheiov1995_SPSD}, and some simple heuristics were introduced and   extended to the setting of multi-machine and multi-step deterioration.
\citet{Jeng2004SPSD} proved that the problem proposed by \citet{Mosheiov1995_SPSD} is NP-hard in the ordinary sense based on a pseudo-polynomial time dynamic programming algorithm, and introduced two dominance rules and a lower bound to develop a branch and bound algorithm for deriving optimal solutions.
\citet{Cheng2001SPSD} showed the total completion time problem with identical job deteriorating dates is NP-complete and introduced a pseudo-polynomial algorithm for the makespan problem. Moreover,
\citet{Jeng2005SPSD} proposed a branch and bound algorithm incorporating a lower bound and two elimination rules for the total completion time problem. Owing to the intractability of the problem,
\citet{He2009SPSD} developed a   branch and bound algorithm  and a weight combination search algorithm to derive the optimal and near-optimal solutions.

The problem was extended by \citet{Cheng2012SPSD_VNS} to the case with parallel machines, where a {\em variable neighborhood search algorithm} was proposed to solve the parallel machine scheduling problem.
Furthermore,
\citet{Layegh2009SPSD} studied the total weighted completion time scheduling problem on a single machine under job step deterioration assumption, and proposed a {\em memetic algorithm} with the average percentage error of 2\%.
Alternatively, batch scheduling with step-deteriorating effect  also attracts the attention of some researchers, for example, referring to \citet{Barketau2008SPSD_batch}, \citet{Leung2008SPSD_batch} and \citet{Mor2012SPSD_batch}.
With regard to the piecewise linear deteriorating model, the single machine scheduling problem with makespan minimization was firstly addressed in \citep{Kunnathur1990SPLD}. Following this line of research, successive research works, such as \citet{Kubiak1998SPLD}, \citet{Cheng2003SPLD}, \citet{Moslehi2010SPLD}, spurred in the literature.

However, these objective functions with due dates were rarely studied under step-deterioration model in the literature.
\citet{Guo2014GVNSSPSDTT} recently found that the total tardiness problem in a single machine is NP-hard, and introduced two heuristic algorithms.
As the more general case, the single machine total weighted tardiness problem (SMTWT) has been extensively studied, and several dispatching heuristics were also proposed for obtaining the near-optimal solutions \citep{Potts1991SPTWT,Kanet2004SPTWT}.
\emph{To the best of our knowledge, there is no research that discusses the} SMTWTSD {\em problem}.  Although the SMTWT problem has been proved to be strongly NP-hard \citep{Lawler1977pp,Lenstra1977complexity}
and \citep[p.~58]{Pin2012}, the complexity of the SMTWTSD problem under consideration is still open. Therefore, this paper gives \emph{the proof of  strong NP-hardness of the} {SMTWTSD} {\em problem}.   \emph{Several efficient dispatching heuristics are presented and analyzed as well}. These dispatching heuristics can deliver a feasible schedule within reasonable computation time for large-sized problem instances. Moreover,  they can be   used to generate an initial solution with certain quality required by a meta-heuristics.

%
%
%
%

The remainder of this paper is organized as follows. Section \ref{sec:Sec_PDF} provides a definition of the single machine total weighted tardiness problem with step-deteriorating jobs and formulates the problem as a mixed integer programming model. The complexity results of the problem considered in this paper and some extended problems are discussed in Section \ref{sec:Sec_CS}.
In Section \ref{sec:Sec_HA}, seven improved heuristic procedures are described. Section \ref{sec:Sec_CE} presents the computational results and analyzes the performances of the proposed heuristics.  Finally, Section \ref{sec:Sec_CN} summarizes the findings of this paper.

\section{Problem description and formulation \label{sec:Sec_PDF}}

The problem considered in this paper is to schedule  $n$ jobs of the set $\mathbb{N}_n:=\{1,2,\ldots,  n\}$, on a single machine for minimizing the  {\em total weighted tardiness}, where the jobs have {\em stepwise} processing times.
%
%
Specifically,
    assume that all jobs are ready at time zero and the machine
is available at all times. Meanwhile, no preemption is assumed. In addition, the machine can handle only
one job at a time, and cannot keep idle until the last job assigned
to it is processed and finished. For each job  $j\in \mathbb{N}_n$, there is a {\em basic
processing time} $a_{j}$, a {\em due date} $d_{j}$ and a given {\em deteriorating threshold}, also called {\em deteriorating
date} $h_{j}$. If the  {\em starting time} $s_{j}$ of job $j\in \mathbb{N}_n$ is less than or
equal to the given threshold $h_{j}$, then job $j$ only requires a basic
processing time $a_{j}$. Otherwise,   an extra
penalty $b_{j}$ is incurred. Thus, the {\em actual processing time} $p_{j}$ of job
$j$ can be defined as a step-function: $p_{j}=a_{j}$ if $s_{j}\leqslant h_{j}$;
$p_{j}=a_{j}+b_{j}$, otherwise. Without loss of generality, the four parameters $a_{j}$, $b_{j}$,
$d_{j}$ and $h_{j}$ are assumed to be positive integers.

Let $\pi=(\pi_{1},\ldots,\pi_{n})$
be a sequence that arranges the current processing order of jobs in $\mathbb{N}_n$, where $\pi_{k}$, $k\in \mathbb{N}_n$,
indicates  the job in position $k$.
The {\em tardiness} $T_{j} $
of  job $j$ in a schedule $\pi$ can be calculated by $$T_{j} =\max\left\{ 0,s_{j} +p_{j} \text{\textminus}d_{j}\right\} .$$
The objective is to find a schedule such that the {\em  total weighted tardiness}
$\sum w_jT_{j}$ is minimized, where the weights $w_j$, $j\in \mathbb{N}_n$ are positive constants.   Using the standard three-field notation \citep{Graham1979optimization}, this
problem studied here can be denoted by $1|p_{j}=a_{j}$ or $a_{j}+b_{j},h_{j}|\sum w_jT_{j}$.

Based on the  above description, we   formulate the problem as a  0-1
integer programming model. Firstly, the decision variable $y_{ij}$, $i,j \in \mathbb{N}_n$ is defined
 such that  $y_{ij}$ is 1 if   job $i$ precedes job
$j$ (not necessarily immediately) on the single-machine, and 0 otherwise.
The formulation of the problem is given below.\\
\emph{Objective function}:
\begin{equation}
\mathrm{minimize}   \quad
Z:=\sum_{j\in \mathbb{N}_n}w_jT_{j}\label{eq:2.1}
\end{equation}
\emph{Subject to}:
\begin{align}
p_{j}=\begin{cases}
a_{j}, & \quad s_{j}\leqslant h_{j}\\
a_{j}+b_{j}, & \quad \mathrm{otherwise},
\end{cases} &\quad \begin{gathered}\forall j\in \mathbb{N}_n\end{gathered}
\label{eq:2.2}\\
s_{i}+p_{i}\leqslant  s_{j}+M(1-y_{ij}),  &\quad \forall i,j\in \mathbb{N}_n,i< j \label{eq:2.3}\\
s_j+p_j \leqslant s_i+My_{ij}, &\quad \forall i,j \in\mathbb{N}_n, i<j \label{eq:2.4}\\
s_{j}+p_{j}-d_{j} \leqslant  T_{j}, &\quad\forall j\in \mathbb{N}_n\label{eq:2.5}\\
y_{ij} \in  \left\{ 0,1\right\}, &\quad\forall i,j\in \mathbb{N}_n,i\neq j\label{eq:2.6}\\
s_{j},T_{j} \geqslant  0, &\quad\forall j\in \mathbb{N}_n, \label{eq:2.7}
\end{align}
where $M$ is a large positive constant such that $M\rightarrow\infty$ as $n\to \infty$.
For example,  $M$ may be  chosen as  $M:=\max_{j\in \mathbb{N}_n}\left\{ d_{j}\right\} +\sum_{j\in \mathbb{N}_n}(a_{j}+b_{j})$.

In the above mathematical model, equation  (\ref{eq:2.1}) represents
the objective of minimizing  the total weighted tardiness. Constraint (\ref{eq:2.2})
defines the processing time of each  job with the consideration of step-deteriorating effect.
Constraint (\ref{eq:2.3})  and \eqref{eq:2.4}
determine the starting
time $s_{j}$ of job $j$ with respect to the decision variables $y_{ij}$.
Constraint (\ref{eq:2.5}) calculates the tardiness of job $j$ with the completion time and the due date.
Finally, Constraints (\ref{eq:2.6}) and (\ref{eq:2.7}) define the
boundary values of variables $y_{ij}$, $s_j$, $T_j$, for $i,j\in \mathbb{N}_n$.

\section{Complexity results \label{sec:Sec_CS}}
This section discusses computational complexity of the problem under consideration. It is generally known that once a problem is proved to be strongly NP-hard, it is impossible to find a polynomial time algorithm or a pseudo polynomial time algorithm to produce its optimal solution. Then heuristic algorithms are presented to obtain near optimal solutions for such a problem. Subsequently, the studied single machine scheduling problem is proved to be strongly NP-hard.
\begin{theorem}\label{thm:NPhard}
The problem $1|p_{j}=a_{j}$ or $a_{j}+b_{j},h_{j}|\sum w_jT_{j}$ is strongly NP-hard.
\end{theorem}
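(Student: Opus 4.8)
The plan is to establish strong NP-hardness by \emph{restriction}: I would show that the classical single machine total weighted tardiness problem $1||\sum w_j T_j$ arises as a special case of the problem under consideration. Since this classical problem is already known to be strongly NP-hard \citep{Lawler1977pp,Lenstra1977complexity}, and since any problem that contains a strongly NP-hard problem as a subclass is itself strongly NP-hard, this embedding yields the claim at once and spares us the trouble of engineering a reduction from scratch. The citation of the classical result in the excerpt strongly suggests this is the intended route.

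Concretely, I would start from an arbitrary instance of $1||\sum w_j T_j$, given by fixed processing times $p_j$, due dates $d_j$ and weights $w_j$ for $j \in \mathbb{N}_n$, and construct an instance of $1|p_{j}=a_{j}$ or $a_{j}+b_{j},h_{j}|\sum w_jT_{j}$ by setting $a_j := p_j$, retaining the same $d_j$ and $w_j$, choosing the penalties $b_j := 1$ (to respect the requirement that every $b_j$ be a positive integer), and fixing each deteriorating date to the common value $h_j := \sum_{i \in \mathbb{N}_n}(a_i + b_i)$. This construction is clearly computable in time polynomial in the size of the source instance.

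The key step is to verify that, with this choice, the step-deterioration is never triggered, so that $p_j = a_j$ in \emph{every} feasible schedule. Here I avoid any circularity by bounding against the largest conceivable processing times: in any schedule, the start time of a job $j$ cannot exceed the total processing time of the remaining jobs, which even if every one of them deteriorated is at most $\sum_{i \in \mathbb{N}_n}(a_i + b_i) - a_j < h_j$, using $a_j \geqslant 1$. Hence $s_j < h_j$ for all $j$, no job ever deteriorates, and the constructed instance has exactly the same feasible schedules and the same objective value $\sum_j w_j T_j$ as the original SMTWT instance. Finally, because all the quantities $a_j$, $b_j = 1$, $d_j$, $w_j$ and $h_j$ are bounded by a polynomial in the numerical parameters of the source instance, the transformation genuinely preserves \emph{strong} rather than merely ordinary NP-hardness.

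The only point demanding real care is precisely this threshold argument: one must be sure that $h_j$ dominates the latest start time achievable in \emph{any} schedule, so that deterioration is impossible a priori and the equivalence with $1||\sum w_j T_j$ is exact. If instead a self-contained reduction were preferred, the natural alternative would be a direct reduction from \textsc{3-Partition}, with deteriorating dates used to carve out the time windows that force the partition structure; in that approach the main obstacle would shift to designing the separator gadgets so that the windows align and the induced weighted tardiness distinguishes ``yes'' from ``no'' instances. The restriction argument above, however, is both shorter and fully rigorous, so I would present it as the proof.
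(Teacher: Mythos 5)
Your proof is correct, but it takes a genuinely different route from the paper's. You argue by \emph{restriction}: embed a classical instance of $1||\sum w_jT_j$ via $a_j:=p_j$, $b_j:=1$, $h_j:=\sum_{i\in\mathbb{N}_n}(a_i+b_i)$; under the paper's no-idle assumption every start time is a sum of actual processing times of at most the other $n-1$ jobs, hence strictly below $h_j$, so deterioration is never triggered and the instances are equivalent; and since $h_j$ is polynomially bounded in the source data, strong (not merely ordinary) NP-hardness transfers from the classical result of \citet{Lawler1977pp} and \citet{Lenstra1977complexity}. The paper does something quite different: it gives a self-contained reduction from 3-PARTITION, with $3t$ partition jobs due at time zero having weights $a_j-\frac{b}{4}$ and $t-1$ high-weight enforcer jobs whose deterioration dates sit one unit before their due dates, and then an accounting argument (the $\delta_i$, $\Delta_i$ bookkeeping and the summation lemma) showing that the target value $z^*$ is attainable if and only if a 3-partition exists. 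The trade-off is clear: your argument is shorter, modular, and fully rigorous given the cited classical theorem, but its hard instances never exercise the deterioration mechanism at all, and it outsources the entire combinatorial core; the paper's construction makes the step-deterioration do real work (enforcer jobs deteriorate exactly when the partition misaligns, amplifying the penalty), re-derives the classical hardness result as a by-product (as its remark notes), and adapts with small changes to the release-time corollaries. Two small caveats on your write-up: the paper stipulates positive integer due dates while hard classical instances typically have $d_j=0$ --- harmless, since replacing $d_j=0$ by $d_j=1$ shifts every schedule's objective by the same constant (and the paper's own reduction uses $d_j=0$ anyway); and your guess that the citation of the classical result signals the intended route is mistaken --- the paper cites it only to contrast with its direct proof --- though this has no bearing on the validity of your argument.
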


The proof of the theorem is based on reducing 3-PARTITION to the problem $1|p_{j}=a_{j}$ or $a_{j}+b_{j},h_{j}|\sum w_jT_{j}$. For a positive integer $t\in \mathbb{N}$, let  integers $a_j\in \mathbb{N}, j\in \mathbb{N}_{3t}$, and $b\in \mathbb{N}$ such that $\frac{b}{4}<a_j<\frac{b}{2}$, $j\in \mathbb{N}_{3t}$ and $\sum_{j\in \mathbb{N}_{3t}}a_j=tb $. The reduction is based on the following transformation. Let the number of jobs $n=4t-1$. Let the partition jobs be such that for $j\in \mathbb{N}_{3t}$,
\\
\begin{equation}
d_j=0, \quad h_j=tb+2(t-1), \quad b_j=1, \quad w_j=a_j-\frac{b}{4}
\end{equation}
 and \begin{eqnarray}
p_{j}&=&\begin{cases}
a_{j}, & s_{j}\leqslant h_{j},\\
a_{j}+b_{j}, & \mathrm{otherwise}.
\end{cases}
\label{eq:partition}
\end{eqnarray}
Introduce the notation $\mathbb{N}_{m,n}:=\{m,m+1, \ldots, n\}$. Let the $t-1$ enforcer jobs be such that for $j\in \mathbb{N}_{3t+1,4t-1}$,
\begin{eqnarray}
d_j=(j-3t)(b+1), \quad h_j=d_j-1, \quad b_j=1,  \nonumber \\ w_j=(b+b^2)(t^2-t)
\end{eqnarray}
and
\begin{eqnarray}
p_{j}&=&\begin{cases}
1, & s_{j}\leqslant h_{j},\\
1+b_{j}, & \mathrm{otherwise}.
\end{cases}
\label{eq:enforcer}
\end{eqnarray}

The first $3t$ partition jobs are due at time $0$, and the last $(t-1)$ enforcer jobs are due at $b+1$, $2b+2$, $\ldots$ and so on. The deterioration dates of all partition jobs are set at $h_j=tb+2(t-1)$, $j\in \mathbb{N}_{3t}$; while the deterioration dates of all enforcer jobs are set at $d_j-1$, for $j\in \mathbb{N}_{3t+1,4t-1}$, that is, one unit before their individual due dates, respectively.
We introduce the set notation $\mathbb{S}_i:=\{{3i-2}, {3i-1},{3i}\}$ , $i\in \mathbb{N}_t$ for sets of three partition jobs. In general,  assume that for $i\in \mathbb{N}_t$,
\begin{equation}\label{eqn:sumtime}
a_{3i-2}+a_{3i-1}+a_{3i}=b+\delta_i,
\end{equation}
where, due to  $\frac{b}{4}<a_j<\frac{b}{2}$, $j\in \mathbb{N}_{3t}$, we must have
\begin{equation}\label{eqn:deltai}
-\frac{b}{4}<\delta_i < \frac{b}{2}.
\end{equation}
The quantity $\delta_i$, $i\in \mathbb{N}_t$ describes the difference from $b$ of the sum of basic processing times of jobs in $\mathbb{S}_i$ , $i\in \mathbb{N}_t$.
Since $\sum_{j\in \mathbb{N}_{3t}}a_j=tb $, we deduce that
\begin{equation}\label{eqn:delta}
\sum_{i\in \mathbb{N}_t} \delta_i=0.
\end{equation}

For convenience we define $\Delta_0=0$ and for $i\in \mathbb{N}_t$,
$$
\Delta_i=\sum_{j\in \mathbb{N}_i} \delta_j.
$$
That is, $\Delta_i$ is the accumulative difference from $ib$ of the cumulative basic processing times of the partition jobs up to the $i$-th set $\mathbb{S}_i$ of 3 partition jobs. Note by \eqref{eqn:delta}, $\Delta_t=0$.  The following observation is useful.
\begin{lemma}
\begin{equation}\label{eqn:conversion}
\sum_{j\in \mathbb{N}_t} j \delta_j=-\sum_{j\in \mathbb{N}_{t-1}}\Delta_j= -\sum_{j\in \mathbb{N}_{t}}\Delta_j.
\end{equation}
\end{lemma}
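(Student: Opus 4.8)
The plan is to express everything through the partial sums $\Delta_j$ and then apply summation by parts (Abel summation). First I would record the two facts the argument leans on: by the definition of $\Delta_j$ we have $\delta_j=\Delta_j-\Delta_{j-1}$ for every $j\in\mathbb{N}_t$, together with the boundary values $\Delta_0=0$ (by convention) and $\Delta_t=0$ (the latter being exactly \eqref{eqn:delta}).

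Next I would substitute $\delta_j=\Delta_j-\Delta_{j-1}$ into the left-hand side and split the sum:
\[
\sum_{j\in\mathbb{N}_t} j\,\delta_j=\sum_{j=1}^{t} j\,\Delta_j-\sum_{j=1}^{t} j\,\Delta_{j-1}.
\]
I would then reindex the second sum by $k=j-1$, which turns it into $\sum_{k=0}^{t-1}(k+1)\Delta_k=\sum_{k=0}^{t-1}k\,\Delta_k+\sum_{k=0}^{t-1}\Delta_k$. In the first sum the $j=t$ term is $t\,\Delta_t=0$, so it effectively runs only up to $t-1$; the $k\,\Delta_k$ contributions then cancel, leaving exactly $-\sum_{k=0}^{t-1}\Delta_k$. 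Because $\Delta_0=0$, this last quantity equals $-\sum_{j\in\mathbb{N}_{t-1}}\Delta_j$, which is the first asserted equality.

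For the second equality I would simply observe that adjoining the $j=t$ term to $\sum_{j\in\mathbb{N}_{t-1}}\Delta_j$ contributes $\Delta_t=0$, so that $\sum_{j\in\mathbb{N}_{t-1}}\Delta_j=\sum_{j\in\mathbb{N}_{t}}\Delta_j$, giving the claim.

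I do not expect a genuine obstacle here, since the whole statement is a routine Abel summation. The only point requiring care is the bookkeeping of the index shift $k=j-1$ and the correct deployment of the two endpoint values; in particular, the clean cancellation of the $k\,\Delta_k$ terms relies precisely on the vanishing of $\Delta_t$, so that is the single step I would write out explicitly to be safe.
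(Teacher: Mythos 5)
Your proof is correct. It is in the same elementary spirit as the paper's --- both are direct calculations hinging on the single fact $\Delta_t=\sum_{j\in\mathbb{N}_t}\delta_j=0$ --- but the mechanics are dual to each other. The paper expands the coefficient $j$ in $j\delta_j$ into a sum of $1$'s, regroups the resulting double sum into tail sums $(\delta_k+\delta_{k+1}+\cdots+\delta_t)$, and then invokes the zero-sum property in the middle of the computation to convert each tail into the negated prefix $-\Delta_{k-1}$. You instead substitute $\delta_j=\Delta_j-\Delta_{j-1}$ and run formal Abel summation, so the zero-sum property enters only once, at the end, to kill the boundary term $t\,\Delta_t$. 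The two computations are interchangeable here: yours follows the standard summation-by-parts template and correctly isolates the one delicate point (the index shift $k=j-1$ together with the vanishing boundary term), while the paper's regrouping avoids any reindexing at the cost of a less systematic-looking rearrangement. Your treatment of the second equality --- adjoining the $j=t$ term contributes $\Delta_t=0$ --- is exactly the paper's observation.
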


\begin{proof}
The proof can be done by direct calculation. The second equality is obvious because  $\Delta_t=0$.
For the first equality, we have
\begin{eqnarray*}
\sum_{j\in \mathbb{N}_t} j \delta_j &= & \delta_1+2\delta_2+\ldots + t\delta_t\\
&=& (\delta_1+\delta_2+\ldots+\delta_t)+(\delta_2+\delta_3+\ldots+\delta_t)\\
&& +\ldots    + (\delta_{t-1}+\delta_t)+\delta_t\\
&=& 0-\delta_1-(\delta_1+\delta_2)-\ldots \\
&&- (\delta_1+\delta_1+\ldots+\delta_{t-2}) \\
&&  -(\delta_1+\delta_2+\ldots+\delta_{t-1})
\end{eqnarray*}
where in the last equality we have used the equality $\sum_{j\in \mathbb{N}_t}\delta_j=0$. Thus we continue by using the definition of $\Delta_j$, $j\in \mathbb{N}_t$ to have
\begin{eqnarray*}
\sum_{j\in \mathbb{N}_t} j \delta_j&=&-\Delta_1-\Delta_2-\ldots-\Delta_{t-2}-\Delta_{t-1}\\
&=&-\sum_{j\in \mathbb{N}_{t-1}}\Delta_j.
\end{eqnarray*}
The lemma is proved.
\end{proof}

Let $\lceil x \rceil$ be the least integer greater than or equal to $x$, and for $k\in \mathbb{N}$, define the index set $J_k:=\{ 3\left\lceil \frac{k}{3}  \right\rceil -2, 3\left\lceil \frac{k}{3}  \right\rceil -1, \ldots,   k\}$.

We are ready to prove the theorem by showing that there exits a schedule with an objective value
\begin{equation}\label{eqn:opt_value}
z^*=\frac{(t^2-t)}{8}(b+b^2)+\sum_{k\in \mathbb{N}_{3t}}\sum_{j\in J_k  }a_j\left(a_k-\frac{b}{4}\right)
\end{equation}
if and only if there exits a solution for the 3-PARTITION problem.

\begin{proof}[Proof of Theorem \ref{thm:NPhard}]

If the 3-PARTITION problem has a solution, the corresponding $3t$ jobs thus can be partitioned into $t$ subsets $\mathbb{S}_i$, $i\in \mathbb{N}_t$, of three jobs each, with the sum of the three processing times in each subset equal to $b$, that is, $\delta_i=0$, for $i\in \mathbb{N}_t$, and  the last $t-1$ jobs are processed exactly during the intervals
$$
[b,b+1], [2 b+1, 2b+2], \ldots, [(t-1)b+t-2, (t-1)b+t-1].
$$
In this scenario, all the $t-1$ enforcer jobs are not tardy and all the $3t$ partition jobs are tardy.   The tardiness of each partition job equals to its completion time. Moreover, no job is deteriorated.

Let $c_j$, $j\in \mathbb{N}_{4t}$ be the completion time of each job.  Let $S_i$ be the starting time of the first job in each set $\mathbb{S}_i$, $i\in \mathbb{N}_t$. When no job  is deteriorated,  that is all jobs are processed with basic processing time, the completion time of each job $j$ in $\mathbb{S}_i$, $i\in \mathbb{N}_t$ is given by $$
c_j=S_i+\sum_{k\in J_j  }a_k.
$$
The  total weighted tardiness of the 3 jobs in $\mathbb{S}_i$, $i\in \mathbb{N}_t$,
equals to
\begin{eqnarray}
 z_i&=& \sum_{j\in \mathbb{S}_i} c_jw_j \nonumber\\
 &=&S_i\sum_{j\in \mathbb{S}_i} w_j + \sum_{j\in \mathbb{S}_i} \sum_{k\in J_j  }a_kw_j   \label{eqn:zk}\\
  &=&[(i-1)b+(i-1)]\frac{b}{4}+\sum_{j\in \mathbb{S}_i}\sum_{k\in J_j  }a_k\left(a_j-\frac{b}{4}\right)  ,  \nonumber
\end{eqnarray}
since $S_i=(i-1)b+(i-1)$ and $\sum_{j\in \mathbb{S}_i} w_j=\frac{b}{4}$, for each $i\in \mathbb{N}_t$.
Thus   the total weighted tardiness is $ \sum_{i\in \mathbb{N}_t}z_i$ which sums to $z^*$ given by \eqref{eqn:opt_value}.

Conversely, if such a 3-partition is not possible, there is at least one $\delta_i\ne 0$, $i\in \mathbb{N}_t$. We next argue that  this must imply  $\Delta z:=z-z^*>0$.

If $\Delta_i>0$, $i\in \mathbb{N}_{t-1}$,  then the $i$-th enforcer job will deteriorate to be processed in the extended time and entail a weighted tardiness.  Introduce the notation $x_+:=\max\{x, 0\}$, and let $\mathcal{N}(i)$, $i\in \mathbb{N}_{t-1}$ denote the number of times the value $\Delta_j>0$, $j\in \mathbb{N}_i$. For convenience, we define $\mathcal{N}(0)=0$. The value  $\mathcal{N}(i)$,  with   $0\le \mathcal{N}(i)\le i$, coincides with the cumulative extended processing time of all the enforcer jobs up to the $i$-th enforcer job.  This implies that the weighted tardiness of the $i$-th enforcer job is given by $$\left(\left(\Delta_i \right)_+ +\mathcal{N}(i)\right)(b+b^2)(t^2-t).$$
 On the other hand, our configuration of the deterioration dates for all the partition jobs ensures that no partition jobs can deteriorate. Therefore, by recalling equation \eqref{eqn:sumtime}, we have
 in this case that for $i\in \mathbb{N}_t$,
$$
S_i=  (i-1)b+(i-1)+\Delta_{i-1} +\mathcal{N}(i-1)
$$
and
$$\sum_{j\in \mathbb{S}_i} w_j=\frac{b}{4}+\delta_i.$$
In view of equation \eqref{eqn:zk} and the weighted tardiness of the enforcer job, we deduce that
the change in the objective function value caused by   $\delta_i$, $i\in \mathbb{N}_t$,    is given by
\begin{eqnarray}
\Delta z_i&=&\left( (i-1)b+(i-1)+\Delta_{i-1} +\mathcal{N}(i-1)  \right)\delta_i \nonumber\\ && +\frac{b}{4}\left(\Delta_{i-1}+\mathcal{N}(i-1)\right) \nonumber \\
& & +\left(\left(\Delta_i \right)_+ +\mathcal{N}(i)\right)(b+b^2)(t^2-t).
\end{eqnarray}

Therefore the total change in the objective function value is given by
\begin{eqnarray*}
\Delta z &=& \sum_{i\in \mathbb{N}_t} \Delta z_i \\
&\ge & (1+b)\sum_{i\in \mathbb{N}_t} (i-1)\delta_i  +\sum_{i\in \mathbb{N}_t} \Delta_{i-1}\left(\delta_i+\frac{b}{4}\right) \\
&& + (b+b^2)(t^2-t) \sum_{i\in \mathbb{N}_t}\mathcal{N}(i) \\
&= & (1+b)\sum_{i\in \mathbb{N}_t} i \delta_i  +\sum_{i\in \mathbb{N}_{t-1}} \Delta_{i}\left(\delta_{i+1}+\frac{b}{4}\right) \\
&& + (b+b^2)(t^2-t) \sum_{i\in \mathbb{N}_{t-1}} \mathcal{N}(i).
\end{eqnarray*}
In the last equality again we have used $\Delta_t=\sum_{j\in \mathbb{N}_t}\delta_j=0$. Now by equation \eqref{eqn:conversion}, we continue to have
\begin{eqnarray}
\Delta z &\ge& (1+b)\sum_{i\in \mathbb{N}_{t-1}} (-\Delta_i)  +\sum_{i\in \mathbb{N}_{t-1}} \Delta_{i}\left(\delta_{i+1}+\frac{b}{4}\right) \nonumber \\
&&+ (b+b^2)(t^2-t) \sum_{i\in \mathbb{N}_{t-1}} \mathcal{N}(i)   \nonumber \\
&=& \sum_{i\in \mathbb{N}_{t-1}} (-\Delta_i) \left( 1+\frac{3}{4}b-\delta_{i+1}  \right)+ \nonumber\\
&& (b+b^2)(t^2-t) \sum_{i\in \mathbb{N}_{t-1}} \mathcal{N}(i). \nonumber\\
\label{eqn:deltaz}
\end{eqnarray}
If there is a $\Delta_i>0$ for some $i\in \mathbb{N}_t$, then $\sum_{i\in \mathbb{N}_{t-1}} \mathcal{N}(i) \ge 1$. Recalling that $\frac{b}{2}>\delta_i>-\frac{b}{4}$, for all $i\in \Delta_t$, we have
\begin{eqnarray*}
\sum_{i\in \mathbb{N}_{t-1}} (-\Delta_i) \left( 1+\frac{3}{4}b-\delta_{i+1}  \right) &>&
(1+b)\sum_{i\in \mathbb{N}_{t-1}} (-\Delta_i)\\
&>& (1+b)\sum_{i\in \mathbb{N}_{t-1}} \left(-i\frac{b}{2}  \right)\\
&=& -\frac{1}{4}b(1+b)t(t-1).
\end{eqnarray*}
Thus in this case, $\Delta z>0$ by equation \eqref{eqn:deltaz}. On the other hand, if all $\Delta_i\le 0$,  and at least one $\delta_i\ne 0$, $i\in \mathbb{N}_t$, then there must be at least one $\Delta_i<0$. Thus
in this case
\begin{eqnarray*}
\Delta z &=&  \sum_{i\in \mathbb{N}_{t-1}} (-\Delta_i) \left( 1+\frac{3}{4}b-\delta_{i+1}  \right)\\
&>&0
\end{eqnarray*}
because $\left( 1+\frac{3}{4}b-\delta_{i+1}  \right)>0 $
for $i\in \mathbb{N}_t$.
Therefore $\Delta z=0$ if and only if $\delta_i=0$, $i\in \mathbb{N}_t$. We have proved the theorem.

\end{proof}
{\bf Remark}: with small changes, the above proof also shows that the scheduling problem of total weighted
tardiness (without deteriorating jobs), represented by $1|$ $|\sum w_jT_{j}$, is strongly NP-hard, of which  proofs might be found in \citep{Lawler1977pp,Lenstra1977complexity}
and \citep[p.~58]{Pin2012}.

After a reflection, we mention that the following three problems also are strongly NP-hard: 1) the problem of total weighted tardiness with deterioration jobs and job release times $r_j$; 2) the problem of total   tardiness with deterioration jobs and job release times $r_j$; 3)  the problem of  maximum lateness of a single machine scheduling problem with job release time $r_j$. Recall that
the maximum lateness is defined to be
$$
L_{\max}=\max\{ L_j:
L_j=c_j-d_j, j\in \mathbb{N}_n\}.
$$
Their proofs can be obtained by slightly modifying our previous proof. In fact, the assumption of release times makes the proof a lot of easier. We summarize these results in the following corollaries.

\begin{coro}
The problem $1|p_{j}=a_{j}$ or \\$a_{j}+b_{j},h_j,r_{j}|\sum w_jT_{j}$ is strongly NP-hard.
\end{coro}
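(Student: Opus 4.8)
The plan is to obtain this corollary directly from Theorem~\ref{thm:NPhard} by the standard restriction technique, exploiting that the present problem is a strict generalization of the one already shown to be strongly NP-hard, rather than devising a new reduction from scratch.

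First I would observe that the problem $1|p_j=a_j$ or $a_j+b_j,h_j|\sum w_jT_j$ of Theorem~\ref{thm:NPhard} is exactly the special case of $1|p_j=a_j$ or $a_j+b_j,h_j,r_j|\sum w_jT_j$ in which every release time is set to $r_j=0$, $j\in\mathbb{N}_n$. When all jobs are released at time zero the release constraints $s_j\ge r_j$ are vacuous, so the feasible schedules, the step-deterioration rule \eqref{eq:partition}, and the objective $\sum w_jT_j$ all coincide with those of Theorem~\ref{thm:NPhard}. Hence the map sending an instance of the problem of Theorem~\ref{thm:NPhard} to the same instance augmented by $r_j=0$ is a trivial polynomial-time reduction that preserves yes- and no-instances of the associated decision problem.

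Next I would note that this embedding introduces no new numerical data and enlarges no existing value, so the reduction from 3-PARTITION constructed in the proof of Theorem~\ref{thm:NPhard} applies verbatim once each job is additionally assigned $r_j=0$: the target value \eqref{eqn:opt_value} and the entire if-and-only-if argument are left untouched, and the instance size grows only by the $O(n)$ zero entries $r_j$. Since 3-PARTITION is strongly NP-complete and every number in the constructed instance remains polynomially bounded, the problem with release times is strongly NP-hard, which proves the corollary.

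Finally, as indicated in the remark preceding the corollaries, an alternative and genuinely simpler route is a \emph{direct} reduction from 3-PARTITION in which the release times take over the role played by the $t-1$ enforcer jobs: by releasing unit-length spacers (or the partition jobs of each block) precisely at the desired block boundaries $(i-1)(b+1)$, $i\in\mathbb{N}_t$, one rigidly fixes the start of each block, so the delicate accounting through $\delta_i$, $\Delta_i$ and $\mathcal{N}(i)$ collapses. I expect the only real obstacle along this second route to be the interaction of the release dates with the no-idle requirement of the model, which must be verified so that the intended block structure is the unique feasible response; the restriction argument above sidesteps this difficulty entirely and already suffices.
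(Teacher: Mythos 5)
Your proposal is correct, but it takes a different route from the paper. The paper does not prove this corollary by restriction; its (very terse) argument is that one can ``slightly modify'' the 3-PARTITION reduction of Theorem~\ref{thm:NPhard}, with the release times $r_j$ taking over the structural role of the enforcer jobs --- precisely the alternative you sketch in your final paragraph, which the authors claim makes the proof ``a lot easier.'' Your primary argument --- that setting $r_j=0$ for all $j$ embeds the problem of Theorem~\ref{thm:NPhard} verbatim into the problem with release times, so that strong NP-hardness transfers by the standard restriction technique with no growth in the numerical parameters --- is sound, rigorous, and for this particular corollary arguably cleaner than the paper's remark, since it requires re-verifying nothing about the target value \eqref{eqn:opt_value} or the if-and-only-if argument. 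What the paper's route buys instead is uniformity across all three corollaries: for the second corollary (unweighted total tardiness, where the no-release version is only known to be NP-hard in the ordinary sense) and the third (maximum lateness, whose no-release version is solved in polynomial time by the EDD rule), restriction is unavailable, and a genuine modification of the reduction using release times is indispensable; your restriction argument works only for the weighted-tardiness corollary at hand. Your caution about the interaction between release dates and the paper's no-idle assumption in the direct route is also well placed --- that is exactly the point the paper glosses over --- but since your restriction argument sidesteps it entirely, your proof stands.
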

\begin{coro}
The problem $1|p_{j}=a_{j}$ or \\$a_{j}+b_{j},h_j,r_{j}|\sum T_{j}$ is strongly NP-hard.
\end{coro}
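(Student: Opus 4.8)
The plan is to reduce \textbf{3-PARTITION} to $1|p_{j}=a_{j}$ or $a_{j}+b_{j},h_{j},r_{j}|\sum T_{j}$, recycling the partition/enforcer construction from the proof of Theorem \ref{thm:NPhard} but letting the release dates $r_j$ do the work that the enormous enforcer weights did there (weights being unavailable in the unweighted objective). First I would keep the $3t$ partition jobs essentially unchanged: release time $r_j=0$, basic processing time $a_j$, unit penalty $b_j=1$, due date $d_j=0$ so that each partition job contributes exactly its completion time to $\sum T_j$, and a common deterioration date $h_j$ taken large enough (as in Theorem \ref{thm:NPhard}) that no partition job can ever deteriorate. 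The $t-1$ enforcer jobs are then pinned in time rather than in weight: enforcer $i$ (for $i\in\mathbb{N}_{t-1}$) gets release time $r=ib+(i-1)$ together with deterioration date $h=r$ and due date $d=r+1$, unit basic processing time and unit penalty. Because $s\ge r=h$ forces $s\le h$ to hold only with equality, such an enforcer is non-tardy precisely when it can \emph{start exactly at its release instant}; if it starts even one unit late it deteriorates to processing time $2$ and becomes tardy.

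For the forward direction I would show that a feasible 3-PARTITION yields the schedule placing each triple in a window of width $b$ and each enforcer in the unit gap at its release time; then nobody deteriorates, every enforcer is on time, and $\sum T_j$ equals an explicitly computable target $Y$ obtained, as in \eqref{eqn:opt_value}, by summing the completion times of the exactly packed partition jobs. For the converse, the key point is that all partition jobs are released at time $0$ and the machine may not idle while work is available, so by each release instant $r_i=ib+(i-1)$ the machine has processed exactly $r_i$ units. A non-tardy enforcer $i$ therefore forces the cumulative partition load preceding it to equal exactly $ib$; since $\frac{b}{4}<a_j<\frac{b}{2}$ forces each window to hold exactly three jobs, cumulative exactness at every $i$ is precisely a solution of 3-PARTITION. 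Hence if none exists, some enforcer must start after $r_i=h_i$, deteriorate, and be tardy, and because deterioration adds an \emph{irreversible} unit of processing that shifts all subsequent completion times later, we obtain $\Delta z:=\sum T_j-Y>0$.

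The step I expect to be the main obstacle is the interaction of the no-idle rule with an \emph{underfilled} window: if the partition jobs scheduled before $r_i$ sum to less than $ib$, the machine is free before the enforcer is released, and the no-idle rule forces it to begin another partition job of size $a_j>\frac{b}{4}$, which necessarily overshoots the boundary and delays the enforcer; I must argue carefully that this induced delay cannot be recovered later. This is exactly where the release times make the argument cleaner than Theorem \ref{thm:NPhard}: the windows are rigidly fixed in time, so I need only track the single running boundary displacement rather than a coupled weighted sum, and the cumulative-deviation quantities $\delta_i,\Delta_i$ together with the identity \eqref{eqn:conversion} provide a convenient ledger certifying $\Delta z=0$ iff $\delta_i=0$ for all $i\in\mathbb{N}_t$. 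Since every parameter is a positive integer polynomially bounded in the 3-PARTITION input and 3-PARTITION is strongly NP-hard, the reduction establishes strong NP-hardness of the problem at hand.
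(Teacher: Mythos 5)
Your construction inherits from Theorem \ref{thm:NPhard} exactly the feature that cannot survive the removal of weights: you keep the partition jobs due at $d_j=0$, so their contribution to the objective is the sum of their completion times, and you take the target $Y$ from \eqref{eqn:opt_value}, which is pinned to one particular grouping and ordering of the partition jobs (the index order). In the weighted proof this pinning is harmless because a single tardy enforcer already costs $(b+b^2)(t^2-t)$, more than all of $z^*$; the huge weights absorb every reordering effect. In the unweighted problem a tardy enforcer costs only its own tardiness plus a knock-on shift of later jobs (order $tb$ at worst), whereas the partition jobs' total completion time varies by order $t^{2}b$ across processing orders. So in a NO instance a schedule may sacrifice one enforcer and reorder the partition jobs to come in strictly below $Y$, and the converse direction collapses. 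Concretely: take $t=2$, $b=100$, $a=(41,41,40,26,26,26)$, a NO instance of 3-PARTITION (no triple sums to $100$); your enforcer has $r=h=100$, $d=101$. The unweighted analogue of \eqref{eqn:opt_value} gives
\begin{equation*}
Y = 3(b+1) + (3\cdot 41+2\cdot 41+40) + (3\cdot 26+2\cdot 26+26) = 704,
\end{equation*}
yet the no-idle schedule $26,26,26,40,\mathrm{E},41,41$ has partition completions $26,52,78,118,161,202$ and enforcer completion $120$ (it starts at $118>h$, deteriorates, and is due at $101$), so
\begin{equation*}
\sum_j T_j = 637 + 19 = 656 < Y.
\end{equation*}
Thus ``some schedule achieves $\le Y$'' no longer certifies a 3-partition; the $\delta_i,\Delta_i$ ledger cannot repair this, because the failure is not about window overshoot (the obstacle you flagged) but about the order-sensitivity of $\sum_j C_j$ once enforcer lateness is cheap.

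The repair --- and what the paper's remark that ``the assumption of release times makes the proof a lot easier'' is actually pointing at --- is to stop charging the partition jobs altogether: give every partition job the common due date $tb+t-1$ (the minimum makespan) instead of $0$, keep your release-pinned enforcers ($r_i=h_i=ib+i-1$, $d_i=r_i+1$), and use the threshold $Y=0$. If a 3-partition exists, the exactly packed schedule finishes each enforcer at its due date and all partition jobs by $tb+t-1$, so $\sum T_j=0$. Conversely, if $\sum T_j=0$, then every enforcer starts exactly at $r_i$ (any later start deteriorates it past its due date), and there can be no idle time (idling would push the final completion beyond $tb+t-1$, which is at least every job's due date); hence the partition work preceding enforcer $i$ is exactly $ib$ for each $i$, and $b/4<a_j<b/2$ forces three jobs per window, i.e.\ a 3-partition. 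This threshold-zero argument needs no completion-time bookkeeping at all, and it, rather than a transplant of the weighted ledger, is the proof the corollary requires.
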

\begin{coro}
The problem $1|p_{j}=a_{j}$ or \\$a_{j}+b_{j},h_j,r_{j}|L_{\max}$ is strongly NP-hard.
\end{coro}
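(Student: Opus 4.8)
The plan is to prove the corollary for the objective $L_{\max}$ by a direct reduction from 3-PARTITION, in the spirit of the proof of Theorem~\ref{thm:NPhard}, but using the release times to pin down the enforcer jobs instead of relying on the weights and the deterioration penalties. I would reuse the $3t$ partition jobs with basic processing times $a_j$, now assigning them release times $r_j=0$ and a common due date $d_j=tb+(t-1)$, and keep the $t-1$ enforcer jobs of unit basic processing time, giving enforcer $k$ (for $k\in\mathbb{N}_{t-1}$) the tight window $r_{3t+k}=kb+(k-1)$ and $d_{3t+k}=kb+k$. Because this window has length exactly one, an enforcer can finish by its due date only if it occupies precisely the unit interval $[kb+k-1,\,kb+k]$. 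I would render deterioration inert by setting $h_j=M$ for every job, where $M$ is the constant of the model; since $M$ exceeds every attainable start time, $p_j=a_j$ holds in every feasible schedule and the instance becomes a genuine release-time instance.

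With these data the enforcers cut the interval $[0,\,tb+(t-1)]$ into $t$ consecutive gaps, each of length exactly $b$, of total capacity $tb$ equal to the total basic processing time of the partition jobs. I would then prove that a schedule with $L_{\max}\le 0$ exists if and only if the 3-PARTITION instance is solvable. For the forward direction, a valid 3-partition lets me pack a triple summing to $b$ into each gap and place each enforcer in its window; every partition job then completes by $tb+(t-1)=d_j$ and every enforcer exactly at its due date, so $L_{\max}=0$. For the converse, any schedule with $L_{\max}\le 0$ forces each enforcer into its unit window and requires every partition job to finish by $tb+(t-1)$; since the total processing time of all $4t-1$ jobs equals $tb+(t-1)$, the machine cannot idle and the partition jobs exactly fill the $t$ gaps. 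As each $a_j$ lies strictly between $b/4$ and $b/2$, a gap of size $b$ admits neither two nor four such jobs, hence exactly three summing to $b$, yielding a 3-partition.

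The same release-time construction settles all three corollaries simultaneously: for the objectives $\sum w_jT_j$ and $\sum T_j$ one merely replaces the threshold $L_{\max}\le 0$ by the corresponding tardiness threshold, zero when the on-time packing is achievable and strictly positive otherwise. I do not expect a genuine obstacle here---this is exactly the point of the paper's remark that release times make the argument easier---so the only item demanding care is the confirmation that deterioration cannot be exploited to evade the tight windows, which is immediate once $h_j=M$ forces $p_j=a_j$ throughout. In fact the quickest self-contained justification is to observe that, with $h_j=M$ and $b_j=1$, the construction exhibits the classical strongly NP-hard problem $1|r_j|L_{\max}$ as a special case, which already proves the corollary.
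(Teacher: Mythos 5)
Your proof is correct, and it is a genuinely different argument from the only proof the paper actually writes out. The paper never proves this corollary explicitly: it merely remarks that the three corollaries follow by ``slightly modifying'' the proof of Theorem~\ref{thm:NPhard} and that release times make things easier. The proof of Theorem~\ref{thm:NPhard} pins the enforcer jobs by giving them enormous weights $(b+b^2)(t^2-t)$ and \emph{active} step-deterioration ($h_j=d_j-1$), and then needs the $\delta_i$, $\Delta_i$, $\mathcal{N}(i)$ accounting because, with all jobs released at time zero, nothing else prevents the scheduler from displacing the enforcers. That machinery cannot be ``slightly modified'' for $L_{\max}$, since the objective has no weights at all; your construction replaces it outright: you pin each enforcer with a unit-length release/due window, and you make the deterioration inert by taking $h_j$ larger than any attainable start time, which is legitimate because such instances still belong to the stated problem class and all constructed numbers remain polynomially bounded (as required for strong NP-hardness). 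The converse direction is sound: $L_{\max}\le 0$ plus total work $tb+(t-1)$ forces a no-idle schedule, non-preemption prevents partition jobs from straddling enforcer windows, and the bounds $b/4<a_j<b/2$ force exactly three jobs per gap. What you have really done is exhibit the classical reduction for $1|r_j|L_{\max}$ inside this problem, and your closing observation is indeed the quickest complete proof: the problem contains $1|r_j|L_{\max}$ as the sub-class with huge $h_j$, and that problem is strongly NP-hard by the very reference \citep{Lenstra1977complexity} the paper already cites, so hardness of the special case transfers to the general problem. Your route buys a self-contained, weight-free argument that settles all three corollaries at once; the paper's gestured route would keep the deterioration structure in play, which your argument shows is unnecessary.
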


\section{Heuristic algorithms\label{sec:Sec_HA}}
The problem under study is proved   strongly NP-hard earlier, then some dispatching heuristics are needed to develop for solving the problem. In this section, the details of these heuristics are discussed. Dispatch heuristics gradually form the whole schedule
by adding one job at a time with the best priority index among the unscheduled jobs. There are several existing  heuristics     designed for the problem without deteriorating jobs. Since the processing times of all jobs considered in our problem depend, respectively, on their starting times, these dispatching heuristics are modified for considering the characteristic of the problem.

Before introducing these procedures, the following notations are defined.
 Let ${N}^s$  denote the {\em ordered set} of  already-scheduled jobs and    ${N}^u$ the {\em unordered set} of  unscheduled jobs.
 Hence $\mathbb{N}_n=  {N}^s\cup  {N}^u $ when the ordering of $N^s$ is not in consideration.
 Let $t^s$ denote the current time, i.e., the maximum completion time of the scheduled jobs in the set $ {N}^s$. We shall call $t^s$ the current time of the sequence $N^s$.
Simultaneously, $t^s$ is also the starting time of the next selected job.
For each of the  unscheduled jobs in $\mathbb{N}^u $, its actual processing time is calculated based on its deteriorating date  and the current time $t^s$.

For most scheduling problems with due dates, the \emph{earliest due date}  (EDD) rule is simple and efficient.
In this paper, the rule is adopted to obtain a schedule by sorting jobs in non-decreasing order of their due dates. In the same way, the \emph{weighted shortest processing time} (WSPT) schedules jobs in decreasing order of $w_j/p_j$. At each iteration, the actual processing times of  unscheduled jobs in ${N}^u$ are needed to recalculate. This is because
when  step-deteriorating effect is considered, the processing time of a job is variable. Even when all jobs are necessarily tardy, the WSPT rule does not guarantee an optimal schedule.
Moreover, the weighted EDD (WEDD) rule introduced by \citet{Kanet2004SPTWT} sequences jobs in non-decreasing order of WEDD, where
\begin{equation}\label{eq:eq_wedd}
  \mathrm{WEDD}_j=d_j/w_j,
\end{equation}

The apparent tardiness cost (ATC) heuristic introduced by \citet{Vepsalainen1987TWTU} was developed for the total weighted tardiness problem when the processing time of a job is constant and known in advance. It showed relatively good performance compared with the EDD and the WSPT. The job with the largest ATC value is selected to be processed. The ATC for job $j$ is determined by the following equation.
\begin{equation}\label{eq:eq_atc}
  \mathrm{ATC}_j=\frac{w_j}{p_j}\exp\left(-\max\{0, d_j-p_j-t^s\}/(\kappa\rho)\right),
\end{equation}
where,  $\kappa$ is a ``look-ahead" parameter usually between 0.5 and 4.5, and $\rho$ is the average processing time of the rest of  unscheduled jobs. The processing time of an already   scheduled job $j\in N^s$ may be $a_j$ or $a_j+b_j$ dependent on if it is deteriorated. Subsequently, the current time $t_s$ is calculated upon the completion of the last job in $N^s$.  The parameter $\rho$ is calculated by averaging  the processing times of   unscheduled jobs in $N^u$  assuming their starting time is at $t^s$.

Based on the cost over time (COVERT) rule \citep{Fisher1976Covert} and the apparent urgency (AU) rule \citep{Morton1984AU}, \citet{Alidaee1996AR} developed a class of heuristic named  COVERT-AU for the standard single machine scheduling weighted tardiness problem. The COVERT-AU heuristic combines the two well known methods, i.e. COVERT and AU.
At the time $t^s$, the COVERT-AU chooses the next job with the largest priority index  $\mathrm{CA}_j$ calculated by the equation
 \begin{equation}\label{eq:eq_ca}
   \mathrm{CA}_j=\frac{w_j}{p_j}(\kappa\rho/(\kappa\rho+\max\{0, d_j-p_j-t^s\})).
\end{equation}
{For the convenience of description, the heuristic with equation \eqref{eq:eq_ca} is denoted by CA in this paper hereafter.

The \emph{weighted modified due date}  (WMDD) rule was developed by \citet{Kanet2004SPTWT} based on modified due date (MDD). In this method, the jobs are processed in non-decreasing order of WMDD. The WMDD is calculated by the   equation
\begin{equation}\label{eq:eq_wmdd}
  \mathrm{WMDD}_j= \frac{1}{w_j} (\max\{p_j, (d_j-t^s)\}).
\end{equation}
Note that when all job weights   are equal, the WMDD is equal to the MDD.

The above heuristics need to recalculate the processing time of the next job for obtaining the priority index except for the EDD and the WEDD. The procedures to recalculate the processing time of the next job is significantly different from   those for  the problem without step-deterioration.   In order to illustrate how  these   heuristics work,
the detailed steps of the WMDD, as an example, are  shown in Algorithm \ref{alg:alg_wmdd}. For   other heuristics, the only difference is the calculation of the priority index.

\begin{algorithm}[htp]
\begin{algorithmic}[1]

\caption{\label{alg:alg_wmdd} The WMDD}

\STATE Input the initial data of a given instance;

\STATE Set ${N}^s=[\;]$, $t^s=0$ and ${N}^u=\{1, 2, \ldots, n\}$;

\STATE Set $k=1$;
\REPEAT
\STATE Compute the processing time of  each job in the set ${N}^u$ based on the current time $t^s$;
\STATE Calculate the WMDD value of each job in $N^u$ according to equation \eqref{eq:eq_wmdd};
\STATE Choose job $j$ from the set ${N}^u$ with the smallest value $\mathrm{WMDD}_j$ to be scheduled in the $k$th position;
\STATE Update the tardiness of job $j$: $T_j=\max\{t^s+p_j-d_j, 0\}$, and ${N}^s=[{N}^s, j]$;
\STATE Delete job $j$ from ${N}^u$;
\STATE $k=k+1$;
\UNTIL {the set ${N}^u$ is empty}
\STATE Calculate the total weighted tardiness of the obtained sequence ${N}^s$.
\end{algorithmic}
\end{algorithm}

A very effective and simple combination search heuristic for minimizing total tardiness was  proposed by \citet{Guo2014GVNSSPSDTT}.
The heuristic is called "Simple Weighted Search Procedure" (SWSP) and works as follows: a combined value of parameters $a_j$, $p_j$ and $h_j$ for job $j$ is calculated as
\begin{equation}\label{eq:eq_swsp}
  m_j=\gamma_1d_j+\gamma_2p_j+\gamma_3h_j,
\end{equation}
where $\gamma_1$, $\gamma_2$ and $\gamma_3$ are three positive constants.
In the SWSP, jobs are sequenced in non-decreasing order of $m$-value.
To accommodate the case of the weighted tardiness, equation \eqref{eq:eq_swsp} is modified to compute a
   priority index $m'$ for a job $j$  calculated by the   equation
\begin{equation}\label{eq:eq_mswsp}
  m'_j=\frac{1}{w_j} (\gamma_1d_j+\gamma_2p_j+\gamma_3h_j).
\end{equation}
The modified method is called "Modified Simple Weighted Search Procedure" (MSWSP).
{In equation \eqref{eq:eq_mswsp}, the values of $\gamma_1$, $\gamma_2$ and $\gamma_3$ are determined by using a dynamically updating strategy. The updating strategy is similar to that proposed by \citet{Guo2014GVNSSPSDTT}. Specifically,  parameter $\gamma_1$ is linearly increased by 0.1 at each iteration and its range is varied from $\gamma_{1\min}$ to $\gamma_{1\max}$.
In this study, $\gamma_{1\min}=0.2$ and $\gamma_{1\max}=0.9$ are chosen based on   preliminary tests by using  randomly generated instances. The parameter $\gamma_2$ adopts a similar  approach with $\gamma_{1\min}$ and $\gamma_{1\max}$ replaced by $\gamma_{2\min}=0.1$ and $\gamma_{2\max}=0.7$, respectively. Once the values of   parameters $\gamma_1$ and $\gamma_2$ are determined, the parameter $\gamma_3:=\max\{1-\gamma_1-\gamma_2, 0.1\}$.}
The detailed steps of the MSWSP is shown in Algorithm \ref{alg:MSWSP}.

\begin{algorithm}[htp]
\begin{algorithmic}[1]
\caption{\label{alg:MSWSP} The MSWSP }

\STATE Input the initial data of a given instance;

\STATE Set $c_{[0]}=0$, $N^{u}=\{1,\cdots,n\}$ and ${N}^s=[\;]$;

\STATE Generate the entire set $\Omega$ of possible triples of weights. For each triple $(\omega_{1}, \omega_{2}, \omega_{3})\in \Omega$,
perform the following steps;

\STATE Choose  job $i$ with minimal due date to be scheduled in
the first position;

\STATE $\:$$c_{[1]}=c_{[0]}+a_{i}$, ${N}^s=\left[ {N}^s,\; i\right]$;

\STATE $\:$delete  job $i$ from $N^{u}$;

\STATE $\:$set $k=2$;

\REPEAT

\STATE choose  job $j$ from $N^{u}$ with the smallest value $m'_j$ to
be scheduled in the $k$th position;
\IF { $c_{[k-1]}>h_{j}$}

\STATE $c_{[k]}=c_{[k-1]}+a_{j}+b_{j}$;

\ELSE

\STATE $c_{[k]}=c_{[k-1]}+a_{j}$;

\ENDIF

\STATE ${N}^s=\left[ {N}^s,\;j\right]$;

\STATE delete  job $j$ from $N^{u}$;

\UNTIL {the set $N^{u}$ is empty}.

\STATE Calculate the   total weighted tardiness  of the obtained schedule $ {N}^s$;

\STATE Output the finial solution $ {N}^s$.
\end{algorithmic}
\end{algorithm}
%
%
%

In order to further improve the quality of the near-optimal solutions, a pairwise swap movement (PS) is incorporated into these heuristics. Let $N^s$ be the sequence output by a heuristic. A swap operation chooses a pair of jobs in positions $i$ and $j$, $1\le i,j\le n$,  from the sequence $ {N}^s$, and exchanges their positions. Denote the new sequence by $N^s_{ji}$. Subsequently,    the total weighted tardiness of the   sequence  $N^s_{ji}$ is calculated. If the  new sequence $N^s_{ji}$ is better with a smaller tardiness than the incumbent one, the incumbent one is replaced by the new sequence. The swap   operation is repeated for any combination of   two indices $i$ and $j$, where $1\leqslant i < j \leqslant n$. Thus the size of the pairwise swap movement (PS) is $n(n-1)/2$. In the following, a heuristic   with the PS
movement is denoted by the symbol $\mathrm{ALG}_{\mathrm{PS}}$, where ALG is one of the above mentioned heuristic algorithms.
For example,  $\mathrm{EDD}_{\mathrm{PS}}$ represents that the earliest due date rule is applied first , then the solution obtained by the EDD is further improved by the PS movement.

\section{Computational experiments\label{sec:Sec_CE}}
In this section, the computational experiments and results are presented to analyze the performance of the above dispatching heuristics. Firstly, randomly generated test problem instances varying from small to large sizes are described. Next, preliminary experiments are carried out to determine appropriate values for the parameters used in some of the heuristics. Then, a comparative analysis of all seven dispatching heuristics is performed. Furthermore, the results of the best method are compared with optimal solutions delivered by ILOG CPLEX 12.5 for small-sized problem instances. All heuristics were coded in MATLAB 2010 and run on a personal computer with Pentium Dual-Core E5300 2.6 GHz processor and 2 GB of RAM.

\subsection{Experimental design\label{sec:subsec_ED}}
The problem instances were generated using the method proposed by \citet{Guo2014GVNSSPSDTT} as follows. For each job $j$, a basic processing time $a_j$ was generated from the uniform distribution [1, 100], a weight $w_j$ was generated from the uniform distribution [1, 10], and a deteriorating penalty $b_j$ is generated from the uniform distribution [1, 100$\times \tau$], where $\tau=0.5$. Problem hardness is likely to depend on the value ranges of deteriorating dates and due dates. For each job $j$, a deteriorating date $h_j$ was drawn from the uniform distribution over three intervals $H_1$:=[1, \emph{A}/2], $H_2$:=[\emph{A}/2, \emph{A}] and $H_3$:=[1, \emph{A}], where $A=\sum_{j \in \mathbb{N}_n}a_j$. Meanwhile, a due date $d_j$ was generated from the uniform distribution
 [$C'_{\max}(1-T-R/2), C'_{\max}(1-T+R/2)$], where $C'_{\max}$ is the value of the maximum completion time obtained by scheduling the jobs in the non-decreasing order of the ratios $a_j/b_j$, $j\in \mathbb{N}_n$,
 $T$ is the average tardiness factor and $R$ is the relative range of due dates. Both  $T$ and $R$ were set at 0.2, 0.4, 0.6, 0.8 and 1.0.

 Overall, 75 different combinations   were generated for different $h$, $T$ and $R$. For the purpose of obtaining   optimal solutions, the number of jobs in each instance was taken to be one of the two sizes of 8 and 10. For the heuristics, the number of jobs can be varied from the small sizes to the large sizes, that comprises 14 sizes including 8, 10, 15, 20, 25, 30, 40, 50, 75, 100, 250, 500, 750 and 1000. In each combination of $h$, $T$, $R$, and $n$, 10 replicates were generated and solved.
 Thus, there are 750 instances for each problem size, totalling 10500 problem instances, which are available from
http:$ //$www.researchgate.net$/$profile$/$Peng\_Guo9.

 In general, the performances of a heuristic is measured by the average relative percentage deviation (RPD)  of the heuristic solution values from   optimal solutions value or   best solution values.
 The average RPD value is calculated as $\frac{1}{K} \sum_{k=1}^{K}\frac{Z^k_{\mathrm{alg}}-Z^k_{\mathrm{opt}}}{Z^k_{\mathrm{opt}}}\times100$, where $K$ is the number of problem instances and $Z^k_{\mathrm{alg}}$ and $Z^k_{\mathrm{opt}}$ are the objective function value of the heuristic method and the optimal solution value for instance $k$, respectively.

 The objective function value delivered by a heuristics may be equal to 0 for an instance  with a low tardiness factor $T$ and a high relative range   $R$ of due dates.  The zero objective value means that all jobs are finished on time. It is troublesome to obtain the RPD in this case, since necessarily $Z^k_{\mathrm{opt}}=0$, thus it leads to a  division by 0 that is undefined.  To avoid this situation, in this paper, the relative improvement versus the worst result (RIVW) used by \citet{Valente2012SMWQTSP} is adopted to evaluate the performance of a proposed heuristics.
 For a given instance,   the RIVW for a   heuristic is defined by the following way. Let $Z_{\mathrm{best}}$ and $Z_{  \mathrm{worst}}$ denote the best and worst solution values delivered by all considered heuristics in comparison, respectively.  When
  $Z_{ \mathrm{best}}=Z_{ \mathrm{worst}}$, the RIVW value of a heuristic algorithm is set to 0. Otherwise, the RIVW  value is calculated as $$\mathrm{RIVW}=(Z_{ \mathrm{worst}}-Z_{ \mathrm{alg}})/Z_{ \mathrm{worst}}\times 100.$$  Based on the definition of RIVW, it can be observed that the bigger the  RIVW value, the better the quality of the corresponding solution.

%
%

\subsection{Parameter selection\label{sec:subsec_PS}}
In order to select an appropriate value for the parameter $\kappa$, preliminary tests were conducted on a separate problem set, which contains instances with 20, 50, 100, and 500 jobs. For each of  these job sizes $n$, 5 replicates were produced for each combination of $n$, $h$, $T$ and $R$. Subsequently, for each problem instance, the objective function value was obtained by using all considered seven heuristics   with a candidate value of $\kappa$. Then the  results of all problem instances were analyzed to determine the best  value of $\kappa$.
The candidate values of the parameter $\kappa$ are  chosen to be {0.5, 1.0, \ldots, 4.5}, which are usually used in the ATC and the CA  for   traditional single machine problem \citep{Vepsalainen1987TWTU,Alidaee1996AR}. The computational tests show that the solutions delivered by the ATC and the CA with $\kappa=0.5$ are relatively better compared with the results obtained by  other values of $\kappa$. Thus, $\kappa$ is set to 0.5 in the sequel.

\subsection{Experimental results\label{sec:subsec_CR}}
The computational results of the proposed seven heuristics are listed in Table \ref{tab:tab_CR}. Specifically, this table provides the mean RIVW values for each heuristic, as well as the number of instances with the best solution ($\mathrm{Num}_{\mathrm{best}}$) found by a heuristic method  from 750 instances  for each problem size. Each mean RIVW value for a heuristic and a particular problem size in Table \ref{tab:tab_CR} is the average of RIVW values from the 750 instances for a given problem size.    From the table, the EDD and WEDD rules are clearly outperformed by the WSPT, ATC, CA, WMDD, and MSWSP heuristics. The mean RIVW values delivered by EDD and WEDD are significantly less than that given by  other heuristics. This is due to the fact that the EDD rule only considers   job  due dates, while the WEDD   relies only on weights  and due dates. In addition, the two rules do not consider the effect of   step-deterioration in calculating the priority index. It is worthwhile to note that the results achieved by the WEDD is better than that given by the EDD.

As far as  the remaining methods, the RIVW values obtained by the WSPT and MSWSP heuristics  are worse than that of   other three (ATC, CA and WMDD) heuristics. But the performance of the MSWSP is better than the WSPT. This indicates that the MSWSP can produce good results for our problem, but it fails to obtain   better solutions compared with the three improved methods (ATC, CA and WMDD).

There is no significant difference between the  results produced by the ATC, CA and WMDD heuristics. The CA procedure provides slightly higher mean relative improvement versus the worst result values. For large-sized instances, the number of the best solutions achieved by the CA is much higher than the ATC and the WMDD.
Therefore, {\em the CA procedure can be deemed as the best one among the seven dispatching heuristic algorithms}.

Computational times of these dispatching heuristic algorithms for each job size are listed in Table \ref{tab:tab_ctime1}. As the size of  instances increases, the CPU time of all methods grows at different degrees. Totally, the MSWSP consumes the most time compared with other algorithms, but surprisingly its maximum CPU time is only 5.71 seconds for the intractable instance with 1000 jobs. The average CPU time of the CA which is 0.66 seconds is less than that of the MSWSP. Since the EDD and the WEDD mainly depend on the ranking index of all jobs' due dates, their computational times are less than 0.01 seconds. It was observed that the CPU time of the other five algorithms follow almost the same trend.

\begin{sidewaystable*}[htp]\scriptsize
  \centering
  \caption{Computational results of the dispatching heuristic algorithms}
  \setlength{\tabcolsep}{5pt}
    \begin{tabular}{rrrrrrrrrrrrrrr}
    \toprule
    $n$     & \multicolumn{7}{c}{RIVW(\%)}                          & \multicolumn{7}{c}{$\mathrm{Num}_{\mathrm{best}}$} \\
    \cmidrule[0.05em](r){2-8}
    \cmidrule[0.05em](lr){9-15}

          & EDD   & WSPT  & WEDD  & ATC   & CA    & WMDD  & MSWSP & EDD   & WSPT  & WEDD  & ATC   & CA    & WMDD  & MSWSP \\
          \midrule
    8     & 15.56  & 39.87  & 28.63  & 48.50  & 55.01  & 49.06  & 50.40  & 87    & 202   & 32    & 331   & 375   & 307   & 206  \\
    10    & 15.38  & 40.76  & 26.66  & 49.31  & 57.68  & 52.36  & 50.57  & 70    & 149   & 10    & 299   & 367   & 278   & 145  \\
    15    & 16.97  & 40.15  & 28.23  & 55.60  & 62.25  & 57.82  & 53.01  & 78    & 80    & 4     & 274   & 340   & 249   & 104  \\
    20    & 17.89  & 41.35  & 27.50  & 57.73  & 65.56  & 60.42  & 54.13  & 70    & 72    & 2     & 237   & 363   & 232   & 79  \\
    25    & 16.92  & 41.11  & 27.60  & 58.49  & 67.46  & 62.92  & 53.26  & 67    & 55    & 2     & 206   & 373   & 241   & 72  \\
    30    & 17.39  & 41.47  & 27.41  & 59.93  & 68.64  & 64.49  & 52.52  & 66    & 61    & 0     & 190   & 388   & 215   & 68  \\
    40    & 18.44  & 41.10  & 26.38  & 61.81  & 70.31  & 66.22  & 52.84  & 77    & 46    & 1     & 166   & 383   & 242   & 78  \\
    50    & 18.76  & 40.59  & 27.31  & 62.62  & 71.22  & 67.17  & 52.30  & 71    & 30    & 0     & 183   & 400   & 237   & 71  \\
    75    & 19.00  & 41.34  & 27.41  & 63.65  & 72.03  & 68.59  & 51.70  & 71    & 31    & 0     & 165   & 421   & 247   & 71  \\
    100   & 20.13  & 41.07  & 27.73  & 64.49  & 72.81  & 69.09  & 51.86  & 75    & 36    & 0     & 143   & 428   & 254   & 75  \\
    250   & 19.54  & 41.37  & 27.46  & 65.91  & 73.74  & 69.47  & 49.92  & 78    & 42    & 0     & 124   & 472   & 250   & 78  \\
    500   & 19.55  & 41.47  & 27.42  & 66.47  & 74.01  & 69.42  & 49.29  & 68    & 34    & 0     & 116   & 487   & 245   & 68  \\
    750   & 19.46  & 41.54  & 27.37  & 66.44  & 74.08  & 69.30  & 49.12  & 69    & 35    & 0     & 105   & 499   & 243   & 69  \\
    1000  & 19.50  & 41.46  & 27.32  & 66.38  & 74.13  & 69.32  & 48.92  & 67    & 32    &       & 95    & 502   & 250   & 67  \\
    Total &       &       &       &       &       &       &       & 1014  & 905   & 51    & 2634  & 5798  & 3490  & 1251  \\
    Avg.  & 18.18  & 41.05  & 27.46  & 60.52  & 68.50  & 63.97  & 51.42  & 72    & 65    & 4     & 188   & 414   & 249   & 89  \\
    \bottomrule
    \end{tabular}%
  \label{tab:tab_CR}%
\end{sidewaystable*}

\begin{sidewaystable*}[htp]\footnotesize
  \centering
  \caption{Computational times of dispatching heuristic algorithms}

    \begin{tabular}{cccccccc}
    \toprule
    $n$     & \multicolumn{7}{c}{CPU Time(s)} \\
 \cmidrule[0.05em](r){2-8}
          & EDD   & WSPT  & WEDD  & ATC   & CA    & WMDD  & MSWSP \\
          \midrule
    8     & $<$0.01 & $<$0.01 & $<$0.01 & $<$0.01 & $<$0.01 & $<$0.01 & 0.01  \\
    10    & $<$0.01 & $<$0.01 & $<$0.01 & $<$0.01 & $<$0.01 & $<$0.01 & 0.01  \\
    15    & $<$0.01 & $<$0.01 & $<$0.01 & $<$0.01 & $<$0.01 & $<$0.01 & 0.02  \\
    20    & $<$0.01 & $<$0.01 & $<$0.01 & $<$0.01 & $<$0.01 & $<$0.01 & 0.03  \\
    25    & $<$0.01 & $<$0.01 & $<$0.01 & $<$0.01 & $<$0.01 & $<$0.01 & 0.04  \\
    30    & $<$0.01 & $<$0.01 & $<$0.01 & $<$0.01 & $<$0.01 & $<$0.01 & 0.05  \\
    40    & $<$0.01 & $<$0.01 & $<$0.01 & $<$0.01 & $<$0.01 & $<$0.01 & 0.06  \\
    50    & $<$0.01 & $<$0.01 & $<$0.01 & $<$0.01 & 0.01  & $<$0.01 & 0.08  \\
    75    & $<$0.01 & 0.01  & $<$0.01 & 0.01  & 0.02  & 0.01  & 0.14  \\
    100   & $<$0.01 & 0.02  & $<$0.01 & 0.02  & 0.05  & 0.02  & 0.20  \\
    250   & $<$0.01 & 0.12  & $<$0.01 & 0.14  & 0.30  & 0.14  & 0.65  \\
    500   & $<$0.01 & 0.47  & $<$0.01 & 0.58  & 1.19  & 0.56  & 1.81  \\
    750   & $<$0.01 & 1.11  & $<$0.01 & 1.33  & 2.72  & 1.31  & 3.48  \\
    1000  & $<$0.01 & 2.05  & $<$0.01 & 2.42  & 4.91  & 2.39  & 5.67  \\
          &       &       &       &       &       &       &  \\
    Avg.  & $<$0.01 & 0.27  & $<$0.01 & 0.32  & 0.66  & 0.32  & 0.88  \\
    \bottomrule
    \end{tabular}%

  \label{tab:tab_ctime1}%
\end{sidewaystable*}%

{Subsequently, the solutions delivered by the seven dispatching heuristics are improved by the PS movement. The seven heuristics with the PS movement are denoted by
$\mathrm{EDD}_{\mathrm{PS}}$, $\mathrm{WSPT}_{\mathrm{PS}}$, $\mathrm{WEDD}_{\mathrm{PS}}$, $\mathrm{ATC}_{\mathrm{PS}}$, $\mathrm{CA}_{\mathrm{PS}}$, $\mathrm{WMDD}_{\mathrm{PS}}$  and $\mathrm{MSWSP}_{\mathrm{PS}}$, respectively.
A comparison of these methods is given in Table \ref{tab:tab_cre_ps}.
Again, this table lists the mean relative improvement versus the worst result (RIVW) for each algorithm  and the number of instances with the best solution found by each of the seven algorithms with the PS movement.}

From Table \ref{tab:tab_cre_ps}, it can be observed that  the $\mathrm{CA}_{\mathrm{PS}}$ provides the best performance among these procedures. In fact, the $\mathrm{CA}_{\mathrm{PS}}$ not only gives the largest RIVW value, but also gives a better   solution for most of the instances. For medium- and large-sized instances, the $\mathrm{CA}_{\mathrm{PS}}$ shows   better performance in terms of the number of best solution ($\mathrm{Num}_{\mathrm{best}}$) compared with the other six methods. In particular,
for the case with 1000 jobs,
the $\mathrm{CA}_{\mathrm{PS}}$ gives the best solutions for 538 over the 750 instances.
It is found that $\mathrm{CA}_{\mathrm{PS}}$ delivers best solutions for on average 473 out of 750 instances for all job sizes.
The average RIVW values delivered by the $\mathrm{WEDD}_{\mathrm{PS}}$, $\mathrm{ATC}_{\mathrm{PS}}$, $\mathrm{CA}_{\mathrm{PS}}$, $\mathrm{WMDD}_{\mathrm{PS}}$ and $\mathrm{MSWSP}_{\mathrm{PS}}$ are more than 40\%.
The RIVW value of the WSPT is only 15.86\%,  significantly less than  that achieved by the other six methods.

Computational times of these methods are listed in Table \ref{tab:tab_tab_ct_ps}.
The average computational times of the seven methods are very close, and the gap of the average CPU times between these  methods is not more than one second.
As expected, the CPU times of these algorithms are increased as the number of jobs increases.
But the computational times of the seven methods are not more than 80 seconds even for the 1000-job case.

{In order to further analyze the results, the one-way Analysis of Variance (ANOVA) is used to check whether the observed difference in the RIVW values for  the dispatching heuristics with the PS movement are statistically significant.
The $\mathrm{WSPT}_{\mathrm{PS}}$ is removed from the statistical analysis since it is clearly worse than the remaining ones.
The means plot and the Fisher Least Significant Difference (LSD) intervals at the 95\% confidence level are shown in Figure \ref{fig:fig_h_ps}.
If the  LSD intervals of two algorithms are not overlapped, the performances of the tested algorithms are  statistically significantly different. Otherwise, the performances of the two algorithms do not lie significantly in the difference.
As it can be seen, {\em $\mathrm{ATC}_{\mathrm{PS}}$, $\mathrm{CA}_{\mathrm{PS}}$ and $\mathrm{WMDD}_{\mathrm{PS}}$
 are not statistically different because their confidence intervals are overlapped.} This observation is really important since it gives the conclusion that can not be obtained from a table of average RIVW results. Moreover, the $\mathrm{CA}_{\mathrm{PS}}$ and $\mathrm{WMDD}_{\mathrm{PS}}$ are statistically significantly better than the other three methods ($\mathrm{EDD}_{\mathrm{PS}}$, $\mathrm{WEDD}_{\mathrm{PS}}$ and $\mathrm{MSWSP}_{\mathrm{PS}}$) by having their LSD intervals of the RIVW values higher than those of other methods.
 However, the  $\mathrm{WEDD}_{\mathrm{PS}}$ and the  $\mathrm{MSWSP}_{\mathrm{PS}}$ are not statistically different due to their overlapping confidence intervals.}

{To evaluate the effect of the PS movement, a comparison of the CA heuristic with the $\mathrm{CA}_{\mathrm{PS}}$ procedure is provided in Table \ref{tab:tab_CA_CA_PS}.
Table \ref{tab:tab_CA_CA_PS} gives the mean relative improvements versus the worst result values for the two procedures, as well as the number of times the $\mathrm{CA}_{\mathrm{PS}}$ procedure performs better than ($\mathrm{Num}_{\mathrm{better}}$) or equal  the CA ($\mathrm{Num}_{\mathrm{equal}}$). Since the heuristic CA always gives   worse result when comparing   with the $\mathrm{CA}_{\mathrm{PS}}$, its RIVW values in Table \ref{tab:tab_CA_CA_PS} equal to zero for all instances.
On average, the  RIVW values obtained by the $\mathrm{CA}_{\mathrm{PS}}$ are 24.71\% better than the RIVW values achieved by the CA.
These results show that {\em the PS movement can significantly improve the quality of   solutions delivered by the CA heuristic for most of the instances.}}

\begin{sidewaystable*}[htp]\scriptsize
  \centering
  \caption{Computational results of dispatching heuristic algorithms with the PS movement}
  \setlength{\tabcolsep}{4pt}
    \begin{tabular}{ccccccccccccccc}
    \toprule
    $n$     & \multicolumn{7}{c}{RIVW(\%)}                          & \multicolumn{7}{c}{$\mathrm{Num}_{\mathrm{best}}$} \\

       \cmidrule[0.05em](r){2-8}
    \cmidrule[0.05em](lr){9-15}
          & $\mathrm{EDD}_{\mathrm{PS}}$ & $\mathrm{WSPT}_{\mathrm{PS}}$  & $\mathrm{WEDD}_{\mathrm{PS}}$  & $\mathrm{ATC}_{\mathrm{PS}}$  & $\mathrm{CA}_{\mathrm{PS}}$  & $\mathrm{WMDD}_{\mathrm{PS}}$  & $\mathrm{MSWSP}_{\mathrm{PS}}$  & $\mathrm{EDD}_{\mathrm{PS}}$ & $\mathrm{WSPT}_{\mathrm{PS}}$  & $\mathrm{WEDD}_{\mathrm{PS}}$  & $\mathrm{ATC}_{\mathrm{PS}}$  & $\mathrm{CA}_{\mathrm{PS}}$  & $\mathrm{WMDD}_{\mathrm{PS}}$  & $\mathrm{MSWSP}_{\mathrm{PS}}$ \\
           \midrule
    8     & 26.95  & 12.84  & 29.26  & 31.67  & 33.12  & 31.90  & 32.54  & 243   & 441   & 288   & 506   & 535   & 514   & 422  \\
    10    & 28.76  & 14.37  & 32.27  & 35.18  & 36.57  & 35.79  & 35.02  & 162   & 339   & 179   & 460   & 483   & 468   & 335  \\
    15    & 33.13  & 15.04  & 36.71  & 40.43  & 42.39  & 40.40  & 39.80  & 148   & 233   & 140   & 392   & 435   & 389   & 218  \\
    20    & 35.00  & 15.81  & 38.67  & 43.41  & 44.91  & 43.64  & 42.11  & 139   & 202   & 110   & 370   & 416   & 370   & 181  \\
    25    & 35.88  & 14.58  & 40.10  & 44.61  & 46.77  & 44.86  & 42.66  & 131   & 166   & 98    & 346   & 426   & 358   & 160  \\
    30    & 36.92  & 14.06  & 41.30  & 46.29  & 47.81  & 46.46  & 43.76  & 134   & 181   & 113   & 333   & 423   & 351   & 158  \\
    40    & 37.79  & 16.21  & 42.93  & 48.17  & 50.03  & 48.53  & 45.07  & 135   & 174   & 104   & 318   & 420   & 365   & 134  \\
    50    & 37.99  & 16.08  & 43.40  & 48.96  & 50.79  & 49.45  & 45.68  & 130   & 154   & 98    & 305   & 441   & 320   & 133  \\
    75    & 38.47  & 16.16  & 44.23  & 50.22  & 52.24  & 50.65  & 46.34  & 137   & 164   & 100   & 272   & 460   & 304   & 139  \\
    100   & 38.97  & 15.78  & 44.96  & 51.10  & 53.00  & 51.70  & 46.68  & 136   & 153   & 101   & 264   & 471   & 291   & 133  \\
    250   & 38.98  & 16.94  & 45.44  & 51.99  & 54.21  & 52.49  & 47.02  & 141   & 159   & 101   & 234   & 527   & 254   & 140  \\
    500   & 38.87  & 17.72  & 45.43  & 52.20  & 54.56  & 52.66  & 46.95  & 141   & 161   & 103   & 244   & 520   & 263   & 140  \\
    750   & 38.58  & 18.45  & 45.36  & 51.88  & 54.51  & 52.50  & 46.82  & 144   & 166   & 100   & 252   & 523   & 254   & 143  \\
    1000  & 38.37  & 18.02  & 45.20  & 51.67  & 54.52  & 52.44  & 46.64  & 147   & 166   & 105   & 240   & 538   & 252   & 146  \\
    Total &       &       &       &       &       &       &       & 2068  & 2859  & 1740  & 4536  & 6618  & 4753  & 2582  \\
    Avg.  & 36.05  & 15.86  & 41.09  & 46.27  & 48.24  & 46.68  & 43.36  & 148   & 204   & 124   & 324   & 473   & 340   & 184  \\

    \bottomrule
    \end{tabular}%
  \label{tab:tab_cre_ps}%
\end{sidewaystable*}

\begin{sidewaystable*}[htbp]\footnotesize
  \centering
  \caption{Computational times of dispatching heuristic algorithms with the PS movement}
    \begin{tabular}{cccccccc}
    \toprule
    \multirow{2}[1]{*}{$n$}      & \multicolumn{7}{c}{CPU Time(s)} \\
 \cmidrule[0.05em](r){2-8}
          & $\mathrm{EDD}_{\mathrm{PS}}$ & $\mathrm{WSPT}_{\mathrm{PS}}$  & $\mathrm{WEDD}_{\mathrm{PS}}$  & $\mathrm{ATC}_{\mathrm{PS}}$  & $\mathrm{CA}_{\mathrm{PS}}$  & $\mathrm{WMDD}_{\mathrm{PS}}$  & $\mathrm{MSWSP}_{\mathrm{PS}}$ \\
\midrule
    8     & $<$0.01 & $<$0.01 & $<$0.01 & $<$0.01 & $<$0.01 & $<$0.01 & 0.01  \\
    10    & $<$0.01 & $<$0.01 & $<$0.01 & $<$0.01 & $<$0.01 & $<$0.01 & 0.01  \\
    15    & $<$0.01 & $<$0.01 & $<$0.01 & $<$0.01 & $<$0.01 & $<$0.01 & 0.02  \\
    20    & $<$0.01 & $<$0.01 & $<$0.01 & $<$0.01 & $<$0.01 & $<$0.01 & 0.03  \\
    25    & $<$0.01 & $<$0.01 & $<$0.01 & $<$0.01 & 0.01  & $<$0.01 & 0.04  \\
    30    & $<$0.01 & 0.01  & $<$0.01 & 0.01  & 0.01  & 0.01  & 0.05  \\
    40    & 0.01  & 0.02  & 0.01  & 0.02  & 0.02  & 0.02  & 0.08  \\
    50    & 0.02  & 0.03  & 0.02  & 0.03  & 0.04  & 0.03  & 0.11  \\
    75    & 0.07  & 0.08  & 0.07  & 0.08  & 0.10  & 0.08  & 0.21  \\
    100   & 0.14  & 0.16  & 0.14  & 0.17  & 0.19  & 0.16  & 0.34  \\
    250   & 1.47  & 1.56  & 1.47  & 1.60  & 1.75  & 1.60  & 2.12  \\
    500   & 9.71  & 9.87  & 9.79  & 10.10  & 10.66  & 10.12  & 11.56  \\
    750   & 30.3  & 30.28  & 30.35  & 30.96  & 32.06  & 30.96  & 33.69  \\
    1000  & 69.43 & 68.93  & 69.61  & 70.29  & 72.10  & 70.27  & 74.75  \\
          &       &       &       &       &       &       &  \\
    Avg.  & 7.94  & 7.92  & 7.96  & 8.09  & 8.35  & 8.09  & 8.79  \\

    \bottomrule
    \end{tabular}%
  \label{tab:tab_tab_ct_ps}%
\end{sidewaystable*}%

\begin{figure*}[htbp]
  \centering
  \includegraphics[width=0.75\textwidth]{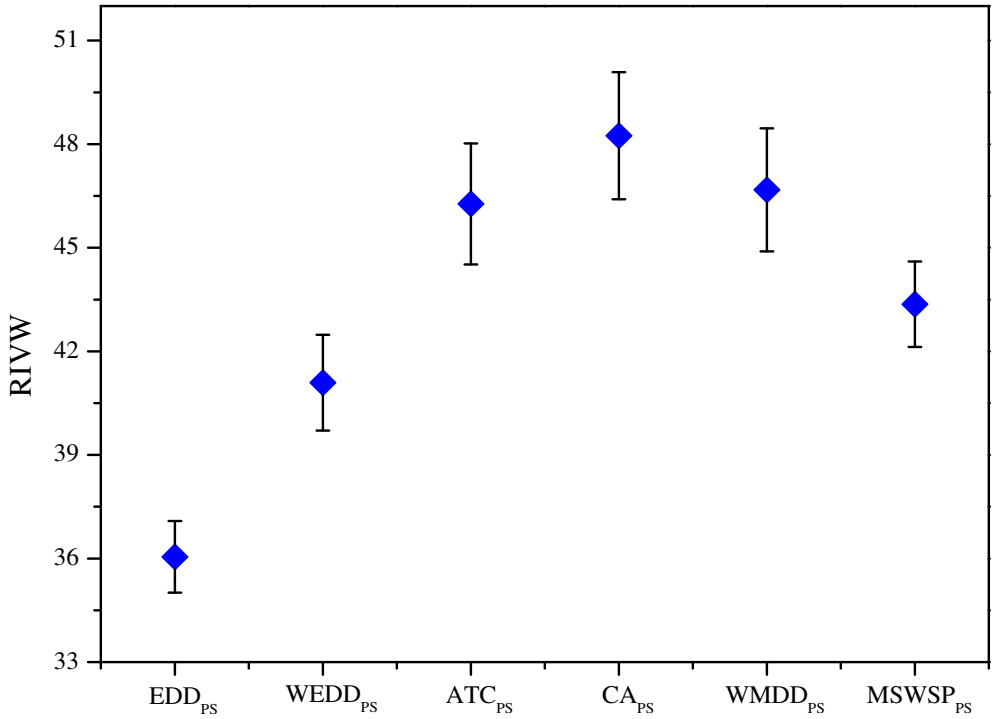}\\
  \caption{Means plot and the LSD intervals (at the 95\% confidence level) for the different dispatching heuristic algorithms with the PS movement}\label{fig:fig_h_ps}
\end{figure*}

\begin{table}[htbp]\footnotesize
  \centering
  \caption{Comparison of CA with $\mathrm{CA}_{\mathrm{PS}}$}

    \begin{tabular}{ccccc}
    \toprule
    \multirow{2}[3]{*}{$n$} & \multicolumn{2}{c}{RIVW(\%)} & \multicolumn{2}{c}{$\mathrm{CA}_{\mathrm{PS}}$ versus CA} \\
     \cmidrule[0.05em](r){2-5}
          & CA    & $\mathrm{CA}_{\mathrm{PS}}$ & $\mathrm{Num}_{\mathrm{equal}}$ & $\mathrm{Num}_{\mathrm{better}}$ \\
          \midrule
    8     & 0.00  & 18.15  & 225   & 525  \\
    10    & 0.00  & 19.56  & 171   & 579  \\
    15    & 0.00  & 24.22  & 80    & 670  \\
    20    & 0.00  & 25.73  & 59    & 691  \\
    25    & 0.00  & 26.08  & 44    & 706  \\
    30    & 0.00  & 26.09  & 47    & 703  \\
    40    & 0.00  & 27.92  & 46    & 704  \\
    50    & 0.00  & 26.74  & 55    & 695  \\
    75    & 0.00  & 27.63  & 66    & 684  \\
    100   & 0.00  & 28.09  & 72    & 678  \\
    250   & 0.00  & 25.07  & 103   & 647  \\
    500   & 0.00  & 24.40  & 110   & 640  \\
    750   & 0.00  & 23.45  & 117   & 633  \\
    1000  & 0.00  & 22.86  & 119   & 631  \\
    Avg.  & 0.00  & 24.71  & 94    & 656  \\
    \bottomrule
    \end{tabular}%

  \label{tab:tab_CA_CA_PS}%
\end{table}%

Next, the $\mathrm{CA}_{\mathrm{PS}}$ heuristic is compared with the optimal solutions delivered by the CPLEX 12.5 for instances with 8 jobs and 10 jobs. Here, the performance of the heuristic is measured by the mean relative improvement of the optimum objective function value versus the heuristic solution (RIVH), as well as the number of instances with the optimal solution  given by the heuristic ($\mathrm{Num}_{\mathrm{opt}}$). For a given instance, the relative improvement of the optimum objective function value versus heuristic is calculated as follows.
When $Z_{\mathrm{opt}}=Z_{\mathrm{alg}}$, the RIVH value is set to 0. Otherwise, the RIVH value is calculated as $$\mathrm{RIVH}=(Z_{\mathrm{alg}}-Z_{\mathrm{opt}})/Z_{\mathrm{alg}}\times100.$$
According to the definition of  the RIVH value,   the smaller the mean RIVH value, the better the quality of the solutions delivered by a heuristic is for the set of instances.

The comparison results were shown in Table \ref{tab:tab_cos1} and \ref{tab:tab_cos2}. The two tables show that 
{\em the  tardiness factor $T$ and the deteriorating interval $H$ can significantly affect the performance of the heuristic $\mathrm{CA}_{\mathrm{PS}}$.} For large values of $T$, the RIVH values are relatively small and the corresponding objective function values are on average quite close to the optimum achieved by the CPLEX. When the deteriorating interval is $H_2$,  most jobs tend to have  large deteriorating dates, and may   not be  deteriorated.
Thus the RIVH values of the test instances with $H_2$ are less than that of the instances with $H_1$ and $H_3$.
This observation has been demonstrated by \citet{Cheng2012SPSD_VNS} for parallel machine scheduling problem.
 In this case, the heuristic $\mathrm{CA}_{\mathrm{PS}}$ can give more optimal solutions compared with the instances with $H_1$ and $H_3$. Overall, the $\mathrm{CA}_{\mathrm{PS}}$ is effective in solving the problem under consideration since the maximum mean RIVH value is below 20\% for instances with 10 jobs.

\begin{table}[htbp]\footnotesize
  \centering
  \caption{Comparison with optimum results for instances with 8 jobs.}
    \begin{tabular}{rrrrrrrr}
    \toprule
    \multicolumn{1}{c}{\multirow{2}[4]{*}{$T$}} & \multicolumn{1}{c}{\multirow{2}[4]{*}{$R$}} & \multicolumn{3}{c}{RIVH(\%)} & \multicolumn{3}{c}{$\mathrm{Num}_{\mathrm{opt}}$} \\
   \cmidrule[0.05em](r){3-5}
   \cmidrule[0.05em](r){6-8}
    \multicolumn{1}{c}{} & \multicolumn{1}{c}{} & \multicolumn{1}{c}{$H_1$} & \multicolumn{1}{c}{$H_2$} & \multicolumn{1}{c}{$H_3$} & \multicolumn{1}{c}{$H_1$} & \multicolumn{1}{c}{$H_2$} & \multicolumn{1}{c}{$H_3$} \\
     \midrule
   0.2   & \multicolumn{1}{c}{0.2} & \multicolumn{1}{c}{40.39 } & \multicolumn{1}{c}{1.07 } & \multicolumn{1}{c}{16.94 } & \multicolumn{1}{c}{0 } & \multicolumn{1}{c}{7 } & \multicolumn{1}{c}{5 } \\
          & \multicolumn{1}{c}{0.4} & \multicolumn{1}{c}{48.40 } & \multicolumn{1}{c}{0.47 } & \multicolumn{1}{c}{34.66 } & \multicolumn{1}{c}{3 } & \multicolumn{1}{c}{8 } & \multicolumn{1}{c}{4 } \\
          & \multicolumn{1}{c}{0.6} & \multicolumn{1}{c}{9.46 } & \multicolumn{1}{c}{7.97 } & \multicolumn{1}{c}{35.52 } & \multicolumn{1}{c}{7 } & \multicolumn{1}{c}{7 } & \multicolumn{1}{c}{5 } \\
          & \multicolumn{1}{c}{0.8} & \multicolumn{1}{c}{18.78 } & \multicolumn{1}{c}{10.00 } & \multicolumn{1}{c}{0.00 } & \multicolumn{1}{c}{7 } & \multicolumn{1}{c}{9 } & \multicolumn{1}{c}{10 } \\
          & \multicolumn{1}{c}{1} & \multicolumn{1}{c}{9.29 } & \multicolumn{1}{c}{4.56 } & \multicolumn{1}{c}{6.25 } & \multicolumn{1}{c}{7 } & \multicolumn{1}{c}{8 } & \multicolumn{1}{c}{8 } \\
    0.4   & \multicolumn{1}{c}{0.2} & \multicolumn{1}{c}{13.50 } & \multicolumn{1}{c}{2.32 } & \multicolumn{1}{c}{14.23 } & \multicolumn{1}{c}{1 } & \multicolumn{1}{c}{7 } & \multicolumn{1}{c}{2 } \\
          & \multicolumn{1}{c}{0.4} & \multicolumn{1}{c}{7.00 } & \multicolumn{1}{c}{0.00 } & \multicolumn{1}{c}{7.50 } & \multicolumn{1}{c}{3 } & \multicolumn{1}{c}{10 } & \multicolumn{1}{c}{3 } \\
          & \multicolumn{1}{c}{0.6} & \multicolumn{1}{c}{12.88 } & \multicolumn{1}{c}{4.62 } & \multicolumn{1}{c}{4.48 } & \multicolumn{1}{c}{3 } & \multicolumn{1}{c}{6 } & \multicolumn{1}{c}{8 } \\
          & \multicolumn{1}{c}{0.8} & \multicolumn{1}{c}{33.70 } & \multicolumn{1}{c}{7.99 } & \multicolumn{1}{c}{14.46 } & \multicolumn{1}{c}{4 } & \multicolumn{1}{c}{6 } & \multicolumn{1}{c}{5 } \\
          & \multicolumn{1}{c}{1} & \multicolumn{1}{c}{10.30 } & \multicolumn{1}{c}{9.01 } & \multicolumn{1}{c}{18.17 } & \multicolumn{1}{c}{8 } & \multicolumn{1}{c}{5 } & \multicolumn{1}{c}{3 } \\
    0.6   & \multicolumn{1}{c}{0.2} & \multicolumn{1}{c}{8.50 } & \multicolumn{1}{c}{1.31 } & \multicolumn{1}{c}{1.90 } & \multicolumn{1}{c}{3 } & \multicolumn{1}{c}{6 } & \multicolumn{1}{c}{7 } \\
          & \multicolumn{1}{c}{0.4} & \multicolumn{1}{c}{9.94 } & \multicolumn{1}{c}{0.98 } & \multicolumn{1}{c}{8.97 } & \multicolumn{1}{c}{2 } & \multicolumn{1}{c}{6 } & \multicolumn{1}{c}{4 } \\
          & \multicolumn{1}{c}{0.6} & \multicolumn{1}{c}{6.38 } & \multicolumn{1}{c}{0.50 } & \multicolumn{1}{c}{2.27 } & \multicolumn{1}{c}{4 } & \multicolumn{1}{c}{5 } & \multicolumn{1}{c}{5 } \\
          & \multicolumn{1}{c}{0.8} & \multicolumn{1}{c}{5.84 } & \multicolumn{1}{c}{2.55 } & \multicolumn{1}{c}{10.09 } & \multicolumn{1}{c}{5 } & \multicolumn{1}{c}{7 } & \multicolumn{1}{c}{1 } \\
          & \multicolumn{1}{c}{1} & \multicolumn{1}{c}{13.31 } & \multicolumn{1}{c}{0.03 } & \multicolumn{1}{c}{4.58 } & \multicolumn{1}{c}{3 } & \multicolumn{1}{c}{8 } & \multicolumn{1}{c}{5 } \\
    0.8   & \multicolumn{1}{c}{0.2} & \multicolumn{1}{c}{3.43 } & \multicolumn{1}{c}{0.36 } & \multicolumn{1}{c}{3.43 } & \multicolumn{1}{c}{7 } & \multicolumn{1}{c}{7 } & \multicolumn{1}{c}{5 } \\
          & \multicolumn{1}{c}{0.4} & \multicolumn{1}{c}{9.58 } & \multicolumn{1}{c}{0.14 } & \multicolumn{1}{c}{1.09 } & \multicolumn{1}{c}{3 } & \multicolumn{1}{c}{8 } & \multicolumn{1}{c}{6 } \\
          & \multicolumn{1}{c}{0.6} & \multicolumn{1}{c}{4.17 } & \multicolumn{1}{c}{0.12 } & \multicolumn{1}{c}{1.71 } & \multicolumn{1}{c}{3 } & \multicolumn{1}{c}{9 } & \multicolumn{1}{c}{5 } \\
          & \multicolumn{1}{c}{0.8} & \multicolumn{1}{c}{3.32 } & \multicolumn{1}{c}{0.87 } & \multicolumn{1}{c}{0.48 } & \multicolumn{1}{c}{4 } & \multicolumn{1}{c}{7 } & \multicolumn{1}{c}{7 } \\
          & \multicolumn{1}{c}{1} & \multicolumn{1}{c}{3.25 } & \multicolumn{1}{c}{1.06 } & \multicolumn{1}{c}{1.85 } & \multicolumn{1}{c}{5 } & \multicolumn{1}{c}{6 } & \multicolumn{1}{c}{7 } \\
    1     & \multicolumn{1}{c}{0.2} & \multicolumn{1}{c}{0.77 } & \multicolumn{1}{c}{0.13 } & \multicolumn{1}{c}{0.78 } & \multicolumn{1}{c}{8 } & \multicolumn{1}{c}{7 } & \multicolumn{1}{c}{6 } \\
          & \multicolumn{1}{c}{0.4} & \multicolumn{1}{c}{1.16 } & \multicolumn{1}{c}{0.00 } & \multicolumn{1}{c}{2.35 } & \multicolumn{1}{c}{6 } & \multicolumn{1}{c}{9 } & \multicolumn{1}{c}{3 } \\
          & \multicolumn{1}{c}{0.6} & \multicolumn{1}{c}{4.02 } & \multicolumn{1}{c}{0.00 } & \multicolumn{1}{c}{2.36 } & \multicolumn{1}{c}{4 } & \multicolumn{1}{c}{9 } & \multicolumn{1}{c}{6 } \\
          & \multicolumn{1}{c}{0.8} & \multicolumn{1}{c}{2.79 } & \multicolumn{1}{c}{0.00 } & \multicolumn{1}{c}{1.48 } & \multicolumn{1}{c}{4 } & \multicolumn{1}{c}{9 } & \multicolumn{1}{c}{6 } \\
          & \multicolumn{1}{c}{1} & \multicolumn{1}{c}{0.75 } & \multicolumn{1}{c}{0.13 } & \multicolumn{1}{c}{1.28 } & \multicolumn{1}{c}{6 } & \multicolumn{1}{c}{8 } & \multicolumn{1}{c}{5 } \\

    Avg.  &       & 11.24  & 2.25  & 7.87  & 4.40  & 7.36  & 5.24  \\
    \bottomrule
    \end{tabular}%
  \label{tab:tab_cos1}%
\end{table}%

\begin{table}[htbp]\footnotesize
  \centering
  \caption{Comparison with optimum results for instances with 10 jobs}
    \begin{tabular}{rrrrrrrr}
    \toprule
    \multicolumn{1}{c}{\multirow{2}[4]{*}{$T$}} & \multicolumn{1}{c}{\multirow{2}[4]{*}{$R$}} & \multicolumn{3}{c}{RIVH(\%)} & \multicolumn{3}{c}{$\mathrm{Num}_{\mathrm{opt}}$} \\
      \cmidrule[0.05em](r){3-5}
   \cmidrule[0.05em](r){6-8}
    \multicolumn{1}{c}{} & \multicolumn{1}{c}{} & \multicolumn{1}{c}{$H_1$} & \multicolumn{1}{c}{$H_2$} & \multicolumn{1}{c}{$H_3$} & \multicolumn{1}{c}{$H_1$} & \multicolumn{1}{c}{$H_2$} & \multicolumn{1}{c}{$H_3$} \\
    \midrule

    0.2   & \multicolumn{1}{c}{0.2} & \multicolumn{1}{c}{55.29 } & \multicolumn{1}{c}{2.52 } & \multicolumn{1}{c}{38.39 } & \multicolumn{1}{c}{2 } & \multicolumn{1}{c}{7 } & \multicolumn{1}{c}{3 } \\
          & \multicolumn{1}{c}{0.4} & \multicolumn{1}{c}{70.00 } & \multicolumn{1}{c}{20.40 } & \multicolumn{1}{c}{30.71 } & \multicolumn{1}{c}{3 } & \multicolumn{1}{c}{5 } & \multicolumn{1}{c}{5 } \\
          & \multicolumn{1}{c}{0.6} & \multicolumn{1}{c}{31.10 } & \multicolumn{1}{c}{5.64 } & \multicolumn{1}{c}{16.71 } & \multicolumn{1}{c}{5 } & \multicolumn{1}{c}{8 } & \multicolumn{1}{c}{8 } \\
          & \multicolumn{1}{c}{0.8} & \multicolumn{1}{c}{25.00 } & \multicolumn{1}{c}{16.07 } & \multicolumn{1}{c}{17.36 } & \multicolumn{1}{c}{7 } & \multicolumn{1}{c}{8 } & \multicolumn{1}{c}{5 } \\
          & \multicolumn{1}{c}{1} & \multicolumn{1}{c}{14.85 } & \multicolumn{1}{c}{22.23 } & \multicolumn{1}{c}{31.82 } & \multicolumn{1}{c}{7 } & \multicolumn{1}{c}{6 } & \multicolumn{1}{c}{5 } \\
    0.4   & \multicolumn{1}{c}{0.2} & \multicolumn{1}{c}{18.28 } & \multicolumn{1}{c}{0.88 } & \multicolumn{1}{c}{10.54 } & \multicolumn{1}{c}{1 } & \multicolumn{1}{c}{7 } & \multicolumn{1}{c}{4 } \\
          & \multicolumn{1}{c}{0.4} & \multicolumn{1}{c}{20.75 } & \multicolumn{1}{c}{2.59 } & \multicolumn{1}{c}{27.79 } & \multicolumn{1}{c}{1 } & \multicolumn{1}{c}{5 } & \multicolumn{1}{c}{0 } \\
          & \multicolumn{1}{c}{0.6} & \multicolumn{1}{c}{43.98 } & \multicolumn{1}{c}{7.06 } & \multicolumn{1}{c}{21.03 } & \multicolumn{1}{c}{0 } & \multicolumn{1}{c}{5 } & \multicolumn{1}{c}{3 } \\
          & \multicolumn{1}{c}{0.8} & \multicolumn{1}{c}{35.02 } & \multicolumn{1}{c}{16.17 } & \multicolumn{1}{c}{23.44 } & \multicolumn{1}{c}{2 } & \multicolumn{1}{c}{3 } & \multicolumn{1}{c}{3 } \\
          & \multicolumn{1}{c}{1} & \multicolumn{1}{c}{16.40 } & \multicolumn{1}{c}{14.17 } & \multicolumn{1}{c}{28.16 } & \multicolumn{1}{c}{0 } & \multicolumn{1}{c}{4 } & \multicolumn{1}{c}{4 } \\
    0.6   & \multicolumn{1}{c}{0.2} & \multicolumn{1}{c}{10.27 } & \multicolumn{1}{c}{1.27 } & \multicolumn{1}{c}{4.09 } & \multicolumn{1}{c}{1 } & \multicolumn{1}{c}{7 } & \multicolumn{1}{c}{4 } \\
          & \multicolumn{1}{c}{0.4} & \multicolumn{1}{c}{7.45 } & \multicolumn{1}{c}{5.21 } & \multicolumn{1}{c}{5.36 } & \multicolumn{1}{c}{5 } & \multicolumn{1}{c}{3 } & \multicolumn{1}{c}{5 } \\
          & \multicolumn{1}{c}{0.6} & \multicolumn{1}{c}{3.01 } & \multicolumn{1}{c}{1.55 } & \multicolumn{1}{c}{5.14 } & \multicolumn{1}{c}{2 } & \multicolumn{1}{c}{4 } & \multicolumn{1}{c}{4 } \\
          & \multicolumn{1}{c}{0.8} & \multicolumn{1}{c}{7.70 } & \multicolumn{1}{c}{3.44 } & \multicolumn{1}{c}{5.92 } & \multicolumn{1}{c}{2 } & \multicolumn{1}{c}{5 } & \multicolumn{1}{c}{3 } \\
          & \multicolumn{1}{c}{1} & \multicolumn{1}{c}{8.51 } & \multicolumn{1}{c}{2.71 } & \multicolumn{1}{c}{5.88 } & \multicolumn{1}{c}{1 } & \multicolumn{1}{c}{3 } & \multicolumn{1}{c}{3 } \\
    0.8   & \multicolumn{1}{c}{0.2} & \multicolumn{1}{c}{2.31 } & \multicolumn{1}{c}{0.53 } & \multicolumn{1}{c}{0.75 } & \multicolumn{1}{c}{4 } & \multicolumn{1}{c}{7 } & \multicolumn{1}{c}{5 } \\
          & \multicolumn{1}{c}{0.4} & \multicolumn{1}{c}{8.01 } & \multicolumn{1}{c}{0.03 } & \multicolumn{1}{c}{3.61 } & \multicolumn{1}{c}{1 } & \multicolumn{1}{c}{8 } & \multicolumn{1}{c}{3 } \\
          & \multicolumn{1}{c}{0.6} & \multicolumn{1}{c}{3.76 } & \multicolumn{1}{c}{1.03 } & \multicolumn{1}{c}{2.81 } & \multicolumn{1}{c}{3 } & \multicolumn{1}{c}{3 } & \multicolumn{1}{c}{5 } \\
          & \multicolumn{1}{c}{0.8} & \multicolumn{1}{c}{3.52 } & \multicolumn{1}{c}{0.29 } & \multicolumn{1}{c}{2.05 } & \multicolumn{1}{c}{5 } & \multicolumn{1}{c}{7 } & \multicolumn{1}{c}{2 } \\
          & \multicolumn{1}{c}{1} & \multicolumn{1}{c}{4.66 } & \multicolumn{1}{c}{1.06 } & \multicolumn{1}{c}{3.66 } & \multicolumn{1}{c}{3 } & \multicolumn{1}{c}{6 } & \multicolumn{1}{c}{3 } \\
    1     & \multicolumn{1}{c}{0.2} & \multicolumn{1}{c}{0.67 } & \multicolumn{1}{c}{0.18 } & \multicolumn{1}{c}{1.14 } & \multicolumn{1}{c}{6 } & \multicolumn{1}{c}{6 } & \multicolumn{1}{c}{4 } \\
          & \multicolumn{1}{c}{0.4} & \multicolumn{1}{c}{2.66 } & \multicolumn{1}{c}{0.20 } & \multicolumn{1}{c}{1.61 } & \multicolumn{1}{c}{2 } & \multicolumn{1}{c}{7 } & \multicolumn{1}{c}{4 } \\
          & \multicolumn{1}{c}{0.6} & \multicolumn{1}{c}{2.93 } & \multicolumn{1}{c}{0.13 } & \multicolumn{1}{c}{0.92 } & \multicolumn{1}{c}{1 } & \multicolumn{1}{c}{9 } & \multicolumn{1}{c}{5 } \\
          & \multicolumn{1}{c}{0.8} & \multicolumn{1}{c}{1.44 } & \multicolumn{1}{c}{0.01 } & \multicolumn{1}{c}{0.92 } & \multicolumn{1}{c}{4 } & \multicolumn{1}{c}{9 } & \multicolumn{1}{c}{5 } \\
          & \multicolumn{1}{c}{1} & \multicolumn{1}{c}{1.37 } & \multicolumn{1}{c}{0.02 } & \multicolumn{1}{c}{1.51 } & \multicolumn{1}{c}{4 } & \multicolumn{1}{c}{9 } & \multicolumn{1}{c}{3 } \\

    Avg.  &       & 15.96  & 5.01  & 11.65  & 2.88  & 6.04  & 3.92  \\
    \bottomrule
    \end{tabular}%
  \label{tab:tab_cos2}%
\end{table}%

\section{Conclusions\label{sec:Sec_CN}}
The total weighted tardiness as the general case has   more important meaning in the practical situation.
In this paper, the single machine scheduling problem with step-deteriorating jobs for minimizing the total weighted tardiness was addressed.
Based on the
characteristics of this problem,  a mathematical programming model is presented for obtaining the optimal solution, and,  the proof of the strong NP-hardness for the problem under consideration is given.
Afterwards, seven heuristics are designed to obtain the near-optimal solutions for  randomly generated problem instances.
Computational results show that these dispatching heuristics can deliver relatively good solutions at   low cost of computational time. Among these dispatching heuristics, the CA procedure as the best solution method can quickly generate a good schedule even for large instances.
Moreover, the test results  clearly indicate that these methods can be significantly improved by the
pairwise swap movement.

In the future,
the consideration of developing meta-heuristics such as a  genetic algorithm or ant colony optimization approach might be an interesting issue. For   medium-sized problems, it is possible that a meta-heuristic  could give better solutions within reasonable computational time. Another consideration is to investigate the total weighted tardiness problem with the step-deteriorating effects under other machine settings, such as parallel machines or flow-shops.
%
%
%
%
%

\begin{acknowledgements}

This work is supported by the National Natural Science
Foundation of China (No. 51405403) and the Fundamental Research Funds
for the Central Universities (No. 2682014BR019).
\end{acknowledgements}

\bibliographystyle{spbasic}

\begin{thebibliography}{35}
\providecommand{\natexlab}[1]{#1}
\providecommand{\url}[1]{{#1}}
\providecommand{\urlprefix}{URL }
\expandafter\ifx\csname urlstyle\endcsname\relax
  \providecommand{\doi}[1]{DOI~\discretionary{}{}{}#1}\else
  \providecommand{\doi}{DOI~\discretionary{}{}{}\begingroup
  \urlstyle{rm}\Url}\fi
\providecommand{\eprint}[2][]{\url{#2}}

\bibitem[{Alidaee and Ramakrishnan(1996)}]{Alidaee1996AR}
Alidaee B, Ramakrishnan K (1996) A computational experiment of covert-au class
  of rules for single machine tardiness scheduling problem. Computers \&
  industrial engineering 30(2):201--209

\bibitem[{Baker and Trietsch(2009)}]{Baker2009book}
Baker KR, Trietsch D (2009) Principles of Sequencing and Scheduling. John Wiley
  \& Sons

\bibitem[{Barketau et~al(2008)Barketau, Cheng, Ng, Kotov, and
  Kovalyov}]{Barketau2008SPSD_batch}
Barketau M, Cheng TE, Ng C, Kotov V, Kovalyov MY (2008) Batch scheduling of
  step deteriorating jobs. Journal of Scheduling 11(1):17--28

\bibitem[{Bilge et~al(2007)Bilge, Kurtulan, and K{\i}ra{\c{c}}}]{Bilge2007TWTU}
Bilge {\"U}, Kurtulan M, K{\i}ra{\c{c}} F (2007) A tabu search algorithm for
  the single machine total weighted tardiness problem. European Journal of
  Operational Research 176(3):1423--1435

\bibitem[{Cheng and Ding(2001)}]{Cheng2001SPSD}
Cheng T, Ding Q (2001) Single machine scheduling with step-deteriorating
  processing times. European Journal of Operational Research 134(3):623--630

\bibitem[{Cheng et~al(2003)Cheng, Ding, Kovalyov, Bachman, and
  Janiak}]{Cheng2003SPLD}
Cheng T, Ding Q, Kovalyov MY, Bachman A, Janiak A (2003) Scheduling jobs with
  piecewise linear decreasing processing times. Naval Research Logistics (NRL)
  50(6):531--554

\bibitem[{Cheng et~al(2005)Cheng, Ng, Yuan, and Liu}]{Cheng2005TWTU}
Cheng TE, Ng C, Yuan J, Liu Z (2005) Single machine scheduling to minimize
  total weighted tardiness. European Journal of Operational Research
  165(2):423--443

\bibitem[{Cheng et~al(2012)Cheng, Guo, Zhang, Zeng, and
  Liang}]{Cheng2012SPSD_VNS}
Cheng W, Guo P, Zhang Z, Zeng M, Liang J (2012) Variable neighborhood search
  for parallel machines scheduling problem with step deteriorating jobs.
  Mathematical Problems in Engineering 928312:1--20

\bibitem[{Crauwels et~al(1998)Crauwels, Potts, and
  Van~Wassenhove}]{Crauwels1998TWTU}
Crauwels H, Potts CN, Van~Wassenhove LN (1998) Local search heuristics for the
  single machine total weighted tardiness scheduling problem. INFORMS Journal
  on computing 10(3):341--350

\bibitem[{Fisher(1976)}]{Fisher1976Covert}
Fisher ML (1976) A dual algorithm for the one-machine scheduling problem.
  Mathematical programming 11(1):229--251

\bibitem[{Gawiejnowicz(2008)}]{Gawiejnowicz2008TDS_Book}
Gawiejnowicz S (2008) Time-dependent scheduling. Springer

\bibitem[{Graham et~al(1979)Graham, Lawler, Lenstra, and
  Kan}]{Graham1979optimization}
Graham RL, Lawler EL, Lenstra JK, Kan A (1979) Optimization and approximation
  in deterministic sequencing and scheduling: a survey. Annals of Discrete
  Mathematics 5:287--326

\bibitem[{Guo et~al(2014)Guo, Cheng, and Wang}]{Guo2014GVNSSPSDTT}
Guo P, Cheng W, Wang Y (2014) A general variable neighborhood search for
  single-machine total tardiness scheduling problem with step-deteriorating
  jobs. Journal of Industrial and Management Optimization 10(4):1071--1090,
  \doi{10.3934/jimo.2014.10.1071}

\bibitem[{He et~al(2009)He, Wu, and Lee}]{He2009SPSD}
He C, Wu C, Lee W (2009) Branch-and-bound and weight-combination search
  algorithms for the total completion time problem with step-deteriorating
  jobs. Journal of the Operational Research Society 60(12):1759--1766

\bibitem[{Jeng and Lin(2004)}]{Jeng2004SPSD}
Jeng A, Lin B (2004) Makespan minimization in single-machine scheduling with
  step-deterioration of processing times. Journal of the Operational Research
  Society 55(3):247--256

\bibitem[{Jeng and Lin(2005)}]{Jeng2005SPSD}
Jeng A, Lin B (2005) Minimizing the total completion time in single-machine
  scheduling with step-deteriorating jobs. Computers \& operations research
  32(3):521--536

\bibitem[{Kanet and Li(2004)}]{Kanet2004SPTWT}
Kanet JJ, Li X (2004) A weighted modified due date rule for sequencing to
  minimize weighted tardiness. Journal of Scheduling 7(4):261--276

\bibitem[{Koulamas(2010)}]{Koulamas2010TTU}
Koulamas C (2010) The single-machine total tardiness scheduling problem: review
  and extensions. European Journal of Operational Research 202(1):1--7

\bibitem[{Kubiak and van~de Velde(1998)}]{Kubiak1998SPLD}
Kubiak W, van~de Velde S (1998) Scheduling deteriorating jobs to minimize
  makespan. Naval Research Logistics (NRL) 45(5):511--523

\bibitem[{Kunnathur and Gupta(1990)}]{Kunnathur1990SPLD}
Kunnathur AS, Gupta SK (1990) Minimizing the makespan with late start penalties
  added to processing times in a single facility scheduling problem. European
  Journal of Operational Research 47(1):56--64

\bibitem[{Lawler(1977)}]{Lawler1977pp}
Lawler EL (1977) A 'pseudopolynomial' algorithm for sequencing jobs to minimize
  total tardiness. Annals of discrete Mathematics 1:331--342

\bibitem[{Layegh et~al(2009)Layegh, Jolai, and Amalnik}]{Layegh2009SPSD}
Layegh J, Jolai F, Amalnik MS (2009) A memetic algorithm for minimizing the
  total weighted completion time on a single machine under step-deterioration.
  Advances in Engineering Software 40(10):1074--1077

\bibitem[{Lenstra et~al(1977)Lenstra, Kan, and Brucker}]{Lenstra1977complexity}
Lenstra J, Kan AR, Brucker P (1977) Complexity of machine scheduling problems.
  Annals of Discrete Mathematics 1:343--362

\bibitem[{Leung et~al(2008)Leung, Ng, and Cheng}]{Leung2008SPSD_batch}
Leung J, Ng C, Cheng T (2008) Minimizing sum of completion times for batch
  scheduling of jobs with deteriorating processing times. European Journal of
  Operational Research 187(3):1090--1099

\bibitem[{Mor and Mosheiov(2012)}]{Mor2012SPSD_batch}
Mor B, Mosheiov G (2012) Batch scheduling with step-deteriorating processing
  times to minimize flowtime. Naval Research Logistics (NRL) 59(8):587--600

\bibitem[{Morton et~al(1984)Morton, Rachamadugu, and
  Vepsalainen}]{Morton1984AU}
Morton T, Rachamadugu R, Vepsalainen A (1984) Accurate mayopic heuristics for
  tardiness scheduling. GSIA Working Paper No, 36-83-84, Carnegie-Mellon
  University, PA

\bibitem[{Mosheiov(1995)}]{Mosheiov1995_SPSD}
Mosheiov G (1995) Scheduling jobs with step-deterioration; minimizing makespan
  on a single-and multi-machine. Computers \& industrial engineering
  28(4):869--879

\bibitem[{Moslehi and Jafari(2010)}]{Moslehi2010SPLD}
Moslehi G, Jafari A (2010) Minimizing the number of tardy jobs under
  piecewise-linear deterioration. Computers \& Industrial Engineering
  59(4):573--584

\bibitem[{Pinedo(2012)}]{Pin2012}
Pinedo M (2012) Scheduling: theory, algorithms, and systems. Springer

\bibitem[{Potts and van Wassenhove(1991)}]{Potts1991SPTWT}
Potts C, van Wassenhove LN (1991) Single machine tardiness sequencing
  heuristics. IIE transactions 23(4):346--354

\bibitem[{Sen et~al(2003)Sen, Sulek, and Dileepan}]{Sen2003static}
Sen T, Sulek JM, Dileepan P (2003) Static scheduling research to minimize
  weighted and unweighted tardiness: a state-of-the-art survey. International
  Journal of Production Economics 83(1):1--12

\bibitem[{Sundararaghavan and Kunnathur(1994)}]{Sundararaghavan1994_SPSD}
Sundararaghavan P, Kunnathur A (1994) Single machine scheduling with start time
  dependent processing times: some solvable cases. European Journal of
  Operational Research 78(3):394--403

\bibitem[{Valente and Schaller(2012)}]{Valente2012SMWQTSP}
Valente J, Schaller JE (2012) Dispatching heuristics for the single machine
  weighted quadratic tardiness scheduling problem. Computers \& Operations
  Research 39(9):2223--2231

\bibitem[{Vepsalainen and Morton(1987)}]{Vepsalainen1987TWTU}
Vepsalainen AP, Morton TE (1987) Priority rules for job shops with weighted
  tardiness costs. Management science 33(8):1035--1047

\bibitem[{Wang and Tang(2009)}]{Wang2009TWTU}
Wang X, Tang L (2009) A population-based variable neighborhood search for the
  single machine total weighted tardiness problem. Computers \& Operations
  Research 36(6):2105--2110

\end{thebibliography}
 \newcommand{\noop}[1]{}

\end{document}